\def\ind{{\mathbf{1}}}
\def\EE{{\mathbb E}}
\def\PP{{\mathbb P}}
\def\NN{{\mathbb N}}
\def\dist{\text{dist}}
\def\diam{\text{diam}}
\def\flood{\text{flood}}
\def\cG{\mathcal{G}}
\def\tx{{\rm tx}}
\def\Bin{{\rm Bin}}
\def\N{{\mathbb N}}
\def\cF{{\mathcal F}}
\def\cE{{\mathcal E}}
\def\hd{\hat{d}}
\def\hS{\hat{S}}
\def\upi{\underline{\pi}}
\def\um{\underline{m}}
\def\uD{\underline{D}}
\def\uphi{\underline{\phi}}
\def\bpi{\bar{\pi}}
\def\bm{\bar{m}}
\def\bD{\bar{D}}
\def\bnu{\bar{\nu}}
\def\hD{\hat{D}}
\def\hS{\hat{S}}
\def\hSigma{\hat{\Sigma}}
\def\dmin{d_{\min}}
\newtheorem{condition}{Condition}
\begin{document}

\title{\Large Flooding in Weighted Random Graphs}
\author{Hamed Amini\thanks{INRIA-ENS, Hamed.Amini@ens.fr} \\
\and
Moez Draief\thanks{Imperial College London, M.Draief@imperial.ac.uk} \\
\and
Marc Lelarge\footnote{INRIA-ENS, Marc.Lelarge@ens.fr}
}
\date{}

\maketitle

\begin{abstract}
In this paper, we study the impact of edge weights on distances in
diluted random graphs. We interpret these weights as delays, and take
them as i.i.d exponential random variables. We analyze the weighted
flooding time defined as the minimum time needed to reach all nodes
from one uniformly chosen node, and the weighted diameter corresponding to
the largest distance between any pair of vertices. Under some regularity conditions on
the degree sequence of the random graph, we show that these quantities
grow as the logarithm of $n$, when the size of the graph $n$ tends to
infinity. We also derive the exact value for the prefactors.

These allow us to analyze an asynchronous randomized broadcast
algorithm for random regular graphs. Our results show that the
asynchronous version of the algorithm performs better than its
synchronized version: in the large size limit of the graph, it will
reach the whole network faster even if the local dynamics are similar
on average.
\end{abstract}

\section{Introduction}

Driven by the distributed nature of modern network architectures,
there has been intense research to devise algorithms to ensure
effective network computation. Of particular interest is the problem
of global node outreach, whereby some major event happening in one
part of the network has to be communicated to all other nodes. In this
context, gossip protocols have been identified as simple, efficient
and robust mechanisms for disseminating and retrieving information for
various network topologies. These mechanisms rely on simple periodic
local operations between neighboring nodes \cite{Pe00}.

Flooding corresponds to the most commonly used such process: a source
node that first records the event notifies all the nodes within its
reach. Subsequently, each of these neighbors forwards information to
all of its neighbors and so on. If the underlying network is connected
such information will eventually reach all the nodes. The performance
of this procedure can be evaluated in terms of the time it takes to
complete. This in particular depends on the underlying network
topology, namely the existence of short paths between different
vertices of the network. In practice, one may imagine that there are
other parameters, besides the network topology, to be taken into
account such as the communication delays between nodes due for example
to congestion. In this context, the spread of the information in the
network can be thought of as a fluid penetrating the network
reminiscent of the problem of first-passage percolation in a random
medium. In this paper, we will consider an asynchronous model in which each edge of the network is equipped with a random delay modeled by an exponential random variable with mean one.

One of the main motivations of our work comes from peer-to-peer
networks. In particular, to motivate our random graph model, we recall
that the most relevant properties of peer-to-peer networks are
connectivity, small average degree, and approximate regularity of the
degrees of the vertices. The random graph model considered in this
paper, explained in detail in Section~\ref{se:mod-res}, has these
properties, and covers the classical $\cG(n, r)$ model, which is the
random graph model in which a graph is drawn uniformly at random from
the set of $n$-vertex $r$-regular graphs, where $r$ is a constant not
depending on $n$. For this model of networks, we consider the push
model for disseminating information: initially, one of the nodes
obtains some piece of information. Then every node which has the
information passes it to another node chosen among its neighbors. The
classical model goes iteratively and all nodes have the same clock. In
each successive round, the nodes having the information choose
independently and uniformly at random the neighbor they transmit
to. In this paper, we analyze an asynchronous randomized broadcast algorithm. Namely nodes are not anymore assumed to be synchronized, so that each node has an independent Poisson clock. A node receiving the information will transmit it to a random neighbor at each tick of its own clock. When the graph of neighbors is a random $r$-regular graph, we show that the asynchronous version of the algorithm performs better than its synchronized version (see Section \ref{se:br}).
To the best of our knowledge, our work is the first to study this
model in an asynchronous version.

From a more theoretical point of view, our work contributes to the
general theory of random graphs by providing new results for the
weighted diameter of connected random graphs.
The analysis of the asymptotic of distances in edge weighted graphs
has received much interest. In \cite{janson99}, Janson considered the
special case of the complete graph with fairly general i.i.d.
weights on edges, including the exponential distribution with
parameter $1$. It is shown that, when $n$ goes to infinity, the
asymptotic distance is
$\log n / n$ for two given points, that the maximum if one point is
fixed and the other varies is $2\log n / n$, and the maximum over all
pairs of points (i.e. the weighted diameter) is $3\log n / n$.
Janson also derived asymptotic results for the corresponding number of hops or hopcount (the number of edges on the paths with the smallest weight).
More recently, a number of papers provide a detailed analysis of the
scaling behavior of the joint distribution of the first passage
percolation and the corresponding hopcount for the complete
graph. In particular, in \cite{bhamidi08, HHM02}, the authors derive limiting
distributions for the first passage percolation on both the complete
graph and dense Erd\"{o}s-R\'enyi random graphs with exponential and
uniform i.i.d.\ weights on edges.
More closely related to the present work, Bhamidi, van der Hofstad and
Hooghiemstra \cite{BHH09} study
first passage percolation on random graphs with finite average degree,
minimum degree greater than $2$ and exponential weights, and derive
explicit distributional asymptotic for the  total weight of the shortest-weight path between two uniformly chosen vertices in
the network. We compare their results to ours in Section \ref{sse:main}. We also explain in Section \ref{sse:proof-bp} why the analysis made in \cite{BHH09} is not sufficient to obtain results for the diameter and how we extend it.

The remainder of the paper is organized as follows.
In Section \ref{se:mod-res}, we define our model for the random graphs and state our main result for the weighted diameter and flooding time. We also compare it to existing works on distances in random graphs. In Section \ref{se:br}, we restrict ourselves to the important class of random regular graphs and analyze a model for asynchronous randomized broadcast. We also compare it to its more classical synchronized version. The main ideas of the proof are given in Section \ref{se:proof}, while technical lemmas are deferred to the Appendix.

\section{Models and Results} \label{se:mod-res}
Given a finite connected graph $G=(V,E)$, the distance $\dist(a,b)$ between two nodes $a$ and $b$  in $V$  is  the number of edges in the shortest path connecting these two vertices. The diameter of  $G$, denoted by $\diam(G)$, is the maximum graph distance between any pair of vertices  in $V$, i.e.
\begin{eqnarray}
\diam(G)=\max\{\dist(a,b), \:a,b\in V \}\: .
\end{eqnarray}
For a graph $G$ with vertex set $V$, the {\em flooding time} is defined by:
$$\flood(G)=\max\{\dist(a,b), \:b\in V \}\:,$$
where $a$ is chosen uniformly at random in $V$.

In this paper, we study the impact of the introduction of edge weights
on the distances in the graph and, in particular, on its diameter.
Such weights can be thought of as economic costs, congestion delays or carrying capabilities that can be encountered in real networks such as transportation systems and communication networks \cite[Chapter 16]{Mieghem}.

For a graph $G=(V,E)$, we assign to each edge $e\in E$
a weight $w_e$.
For any $a,b\in V$, a path between $a$ and $b$ is a sequence
$\pi=(e_1,e_2,\dots e_k)$ where $e_i=(v_{i-1},v_i)\in E$ and $v_i\in V$
for $i\in [1,k]$, with $v_0=a$ and $v_k=b$. We write $e_i\in \pi$ to denote the
fact that edge $e_i$  belongs to the path $\pi$.
For $a,b \in V$, we define $$\text{dist}_w(a,b) = \min_{\pi\in\Pi(a,b)}\sum_{e\in \pi} w_e\:,$$
where $\Pi(a,b)$ denotes the set of all paths from $a$ to $b$ in the
graph. The {\em weighted diameter} and the {\em weighted flooding time} are given by
\begin{eqnarray*}
\diam_w(G) &=& \max\{\dist_w(a,b), \:a,b\in V \}, \ \mbox{ and, } \ \\
\flood_w(G)&=&\max\{\dist_w(a,b), \:b\in V \},
\end{eqnarray*}
where in the definition of flooding, $a$ is chosen uniformly at random in $V$.

Our main results consist of precise asymptotic expressions for the
weighted diameter and weighted flooding time of sparse random graphs on $n$
vertices with (i) a given degree sequence satisfying asymptotic
properties similar to those imposed in \cite{MolReed98}, and (ii)
i.i.d. exponentially distributed weights with parameter $1$.

\subsection{Configuration model.}\label{se:conf}
For $n\in \N$, let $(d_i)_1^n=(d_i^{(n)})_1^n$ be a sequence of
non-negative integers such that $\sum_{i=1}^n d_i$ is even.
For notational simplicity, we will usually not show the dependency on
$n$ explicitly.
By means of the configuration model \cite{bollobas}, we define a random
multigraph with given degree sequence $(d_i)_1^n$, denoted by
$G^*(n,(d_i)_1^n)$ as follows. To each node $i$ we associate $d_i$
labeled half-edges. All half-edges need to be paired to construct the
graph, this is done by a uniform random matching. When a half-edge of $i$ is paired with a half-edge of $j$, we interpret this as an edge between $i$ and $j$. The graph $G^*(n,(d_i)_1^n)$ obtained following this procedure may not be simple, i.e., may contain self-loops due to the pairing of two half-edges of $i$, and multi-edges due to the existence of more than one pairing  between two given nodes. Conditional on the multigraph $G^*(n,(d_i)_1^n)$ being a simple graph, we obtain a uniformly distributed random graph with the
given degree sequence, which we denote by $G(n, (d_i)_1^n)$,
\cite{janson09}.

For $r\in \N$, let $u_r^{(n)}=|\{i, \:d^{(n)}_i=r\}|$ be the number of vertices of degree $r$ and $m^{(n)}$ be the total degree defined by
\begin{eqnarray*}
m^{(n)}=\sum_{i=1}^n d_i = \sum_{r\geq 0} ru_r^{(n)}.
\end{eqnarray*}
From now on, we assume that the sequence $(d_i)_1^n$
satisfies the following regularity conditions analogous to the ones
introduced in \cite{MolReed98} and an additional constraint on the
minimal degree.

\begin{condition}\label{cond-dil}
There exists a distribution $p=\{p_k\}_{k=0}^{\infty}$ such that
\begin{itemize}
\item[(i)] $u_r^{(n)}/n\to p_r$ for every $r\geq 0$ as $n\to
  \infty$;
\item[(ii)] $\lambda:=\sum_{r}rp_r\in (0,\infty)$;
\item[(iii)] $\sum_{i=1}^n d_i^2=O(n)$;
\item[(iv)] for some $\tau > 0$, $\Delta_n:= \max_{i\in V} d_i = O(n^{1/2 -\tau})$;
\item[(v)]  
$\min_{i=1\dots n}d_i=\dmin\geq 3$, and $p_{\dmin} > 0$.
\end{itemize}
\end{condition}

We define $q=\{q_r\}_{r=0}^{\infty}$ the size-biased probability mass function corresponding to $p$, by
\begin{eqnarray}
\label{eq:defq}q_r = \frac{(r+1)p_{r+1}}{\lambda}\: ,
\end{eqnarray}
and let $\nu$ denote its mean, i.e. $\nu = \sum_{r=0}^{\infty} rq_r\in (0,\infty)$.

The condition $\nu > 1$ is equivalent to the existence of a giant component in the configuration model, the size of which is proportional to $n$ (see e.g. \cite{janson09b, MolReed98}). By our Condition \ref{cond-dil} point (v), we have $\nu\geq 2$ and actually, the following lemma, proved in Section~\ref{se:app-connect}, shows that under Condition~\ref{cond-dil} the constructed graph is connected with high probability (w.h.p.). We say that an event $\cE_n$ holds w.h.p. if $\PP(\cE_n)\to 1$ when $n$ tends to infinity.

\begin{lemma}\label{lem:connect}
Consider a random graph $G(n, (d_i)_1^n)$ where the degree sequence $(d_i)_1^n$ satisfies Condition \ref{cond-dil}.
Then $G(n, (d_i)_1^n)$ is connected w.h.p.
\end{lemma}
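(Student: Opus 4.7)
My plan is to argue first that the configuration multigraph $G^*(n, (d_i)_1^n)$ is connected w.h.p., and then transfer this to the simple graph $G(n, (d_i)_1^n)$ via the standard fact from \cite{janson09} that under Condition \ref{cond-dil}(iii) the simplicity probability is bounded away from $0$. This transfer is a one-liner:
$$\PP(G \text{ disconnected}) \leq \frac{\PP(G^* \text{ disconnected})}{\PP(G^* \text{ is simple})} \longrightarrow 0.$$

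To bound $\PP(G^* \text{ disconnected})$, I would use the first-moment method on \emph{closed} subsets. The multigraph $G^*$ is connected iff there is no nonempty proper $S \subsetneq V$ whose $D_S := \sum_{i \in S} d_i$ half-edges are all paired within $S$. By the symmetry $S \leftrightarrow V \setminus S$ it suffices to consider $D_S \leq m/2$, where $m = \sum_i d_i$. In the pairing formulation,
$$p_S := \PP(S \text{ is closed}) = \frac{(D_S - 1)!!\,(m - D_S - 1)!!}{(m-1)!!},$$
which Stirling lets me bound by $p_S \leq C (D_S/m)^{D_S/2}$ in this regime (up to polynomial factors). The task reduces to showing $E := \sum_S p_S = o(1)$, after which Markov's inequality finishes the job.

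I would split $E$ according to $k = |S|$. For constant $k$, Condition \ref{cond-dil}(v) gives $D_S \geq 3k$, so $p_S \leq C(3k/m)^{3k/2}$; combined with $\binom{n}{k} \leq (en/k)^k$ and $m = \Theta(n)$ (Condition \ref{cond-dil}(ii)), each term is $O(n^{-k/2})$, and these sum to $o(1)$ over $k \geq 2$. For $k$ growing up to $n/2$, I would pair the entropy bound $\binom{n}{k} \leq \exp(n\, h(k/n))$, with $h$ the binary entropy, with the full Stirling form of $p_S$; the crucial combinatorial inequality is that, for $D_S \geq 3k$ and $k \leq n/2$,
$$n\, h(k/n) - \tfrac{1}{2}D_S\log\tfrac{m}{D_S} - \tfrac{1}{2}(m-D_S)\log\tfrac{m}{m-D_S} \leq -c\,k$$
for some $c > 0$. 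This is precisely the calculation that makes $\dmin \geq 3$ the sharp threshold for w.h.p.\ connectivity of the configuration model, consistent with Wormald's theorem for random regular graphs.

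The main obstacle is the medium range $k \in [\omega(1), n/2]$, where $D_S$ fluctuates across subsets of a given size and cannot simply be replaced by $3k$. Here Condition \ref{cond-dil}(iii) will be essential: it ensures that subsets of size $k$ with atypically large total degree are rare (since the heavy-tail contribution to $\sum d_i^2$ is $O(n)$), so restricting to typical $D_S$ close to $\lambda k$ is legitimate, and the entropy inequality above then delivers the exponential decay. Summing the bounds over all three regimes yields $E = o(1)$, and combined with the transfer from $G^*$ to $G$ displayed at the start, this completes the proof.
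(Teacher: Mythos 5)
Your proof is essentially correct, but it takes a genuinely different route from the paper. The paper does not argue via closed subsets at all: it recycles the machinery already built for Theorem~\ref{thm-main}. Concretely, Lemma~\ref{lem-R1} shows that with probability $1-o(n^{-1})$ the exploration from a vertex $a$ keeps $S_k(a)\geq 1$ for all $k\leq \alpha_n$, Lemma~\ref{lem-R} then pushes every such cluster up to size $\beta_n$, and Proposition~\ref{prop-up} (each exposed ball of size $\beta_n$ carries $\Theta(\beta_n)$ unmatched half-edges, so two such balls meet except with probability $O(n^{-4})$) lets a union bound over all vertices conclude. Your argument — first-moment counting of sets $S$ with all $D_S$ half-edges matched internally, using $p_S=\binom{m/2}{D_S/2}/\binom{m}{D_S}\approx e^{-(m/2)h(D_S/m)}$ against the entropy bound $\binom{n}{k}\leq e^{nh(k/n)}$, with $\dmin\geq 3$ making the exponent negative, followed by the transfer $\PP(G \mbox{ disconnected})\leq \PP(G^* \mbox{ disconnected})/\PP(G^*\mbox{ simple})$ justified by Condition~\ref{cond-dil}(iii) — is the classical self-contained proof (Wormald-style), and it buys a standalone result independent of the exploration/coupling apparatus; the paper's route buys brevity given that apparatus, and yields the quantitatively stronger statement that all pairwise weighted distances are simultaneously bounded, which is what the upper bound of Theorem~\ref{thm-main} needs anyway.

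One step of your plan is misdiagnosed, though easily repaired: you cannot simply ``restrict to typical $D_S$ close to $\lambda k$'' in the medium range, since atypical subsets still contribute to the first moment and Condition~\ref{cond-dil}(iii) does not remove them. But no such restriction is needed: after the symmetry reduction you work with $3k\leq D_S\leq m/2$, and on this range $p_S$ is nonincreasing in $D_S$ (equivalently, $h$ is increasing on $[0,1/2]$, so $h(D_S/m)\geq h(3k/m)$), hence replacing $D_S$ by $3k$ is always a valid upper bound and your entropy inequality closes the argument; Condition~\ref{cond-dil}(iii) is genuinely needed only for the simplicity transfer (and for the $k=1$ terms via $\sum_i d_i^2=O(n)$). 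Also note that the symmetry reduction gives $D_S\leq m/2$ (hence $k\leq m/6$), not $k\leq n/2$, so the range of $k$ in your key inequality should be stated accordingly.
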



\subsection{Weighted flooding time and diameter.}\label{sse:main}
We now state our main first result concerning the weighted flooding
time and diameter of $G(n, (d_i)_1^n)$ with i.i.d. exponential $1$
weights on the graph edges.

\begin{theorem}\label{thm-main}
Consider a random graph $G(n, (d_i)_1^n)$ with i.i.d. exponential $1$
weights on its edges, where the degree sequence $(d_i)_1^n$ satisfies
Condition \ref{cond-dil}. Then we have
\begin{center}
\begin{eqnarray*}
\frac{\diam_w(G(n, (d_i)_1^n))}{\log n} &\stackrel{p}{\longrightarrow}& \frac{1}{\nu-1} + \frac{2}{\dmin} \ , \mbox{ \ and \ } \\
\frac{\flood_w(G(n, (d_i)_1^n))}{\log n} &\stackrel{p}{\longrightarrow}& \frac{1}{\nu-1} + \frac{1}{\dmin},
\end{eqnarray*}
\end{center}
where $\stackrel{p}{\longrightarrow}$ is the convergence in probability.
\end{theorem}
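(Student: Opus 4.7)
The plan is to couple the weighted exploration from each vertex $v\in V$ to a continuous-time Crump--Mode--Jagers (CMJ) branching process in which each individual has offspring count drawn from the size-biased distribution $q$ (mean $\nu$) with independent $\mathrm{Exp}(1)$ birth delays, while the root has $d_v$ offspring. Solving $\nu/(1+\alpha)=1$ (the Malthusian equation for offspring mean $\nu$ and $\mathrm{Exp}(1)$ delays) gives growth exponent $\alpha=\nu-1$, so that the ball $B_v(t):=\{u:\dist_w(v,u)\le t\}$ satisfies $|B_v(t)|=M_v\,e^{(\nu-1)t}(1+o(1))$ a.s., with a strictly positive martingale limit $M_v$ (positivity is automatic because Condition~\ref{cond-dil}(v) forces $q_0=q_1=0$, ruling out extinction in the CMJ tree). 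The configuration-model coupling is faithful while $|B_v(t)|$ stays below roughly $n^{1/2-\epsilon}$, thanks to the maximum-degree bound of Condition~\ref{cond-dil}(iv). Two balls $B_a(r_a)$, $B_b(r_b)$ meet, and hence bound $\dist_w(a,b)\le r_a+r_b$, once the product of their surface sizes exceeds $n$ by a polylogarithmic factor, giving the heuristic
\begin{equation*}
\dist_w(a,b)\approx\frac{\log n-\log M_a-\log M_b}{\nu-1}.
\end{equation*}

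For the lower bound I would exhibit atypical endpoints with unusually small martingale limits. Let $V_{\min}:=\{v:d_v=\dmin\}$; by Condition~\ref{cond-dil}(i),(v) we have $|V_{\min}|=\Theta(n)$ w.h.p. For $v\in V_{\min}$, the smallest incident edge weight $T_v:=\min_{e\ni v}w_e$ is $\mathrm{Exp}(\dmin)$-distributed, and the $T_v$'s are essentially independent, so extreme-value theory gives $\max_{v\in V_{\min}}T_v=(1+o_p(1))\log n/\dmin$. Picking $b^*$ to attain this maximum, the branching process from $b^*$ stays at $\{b^*\}$ until time $T_{b^*}$, hence $M_{b^*}\le C\,e^{-(\nu-1)T_{b^*}}$ for some $C>0$. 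Plugging into the distance heuristic and using $M_a=\Theta_p(1)$ for the uniform source $a$ yields $\flood_w(G)\ge\bigl(\tfrac{1}{\nu-1}+\tfrac{1}{\dmin}\bigr)\log n\,(1-o_p(1))$. For the diameter one argues identically at both endpoints: the top two values of $\{T_v:v\in V_{\min}\}$ are both asymptotic to $\log n/\dmin$, delivering the factor $2/\dmin$.

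For the matching upper bound the key ingredient is a uniform lower-tail estimate on the martingale limit: there exists $C>0$ such that for every $v$ of degree $d$ and every $s>0$,
\begin{equation*}
\PP\bigl(M_v\le e^{-(\nu-1)s}\bigr)\le C\,e^{-ds},
\end{equation*}
reflecting the fact that $M_v$ is small essentially only when the first generation of edges at $v$ is delayed, after which the subprocess regenerates. A union bound over $v\in V$ then gives $\max_{v\in V}(-\log M_v)\le (1+\epsilon)(\nu-1)\log n/\dmin$ w.h.p., the maximum being driven by minimum-degree vertices because higher-degree vertices have lighter tails. Combined with a second-moment/matching argument showing that any two balls with $r_a+r_b\ge(\log n-\log M_a-\log M_b)/(\nu-1)+\epsilon\log n$ intersect w.h.p.\ (in the spirit of \cite{BHH09}), this closes the upper bound.

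The main obstacle is moving from single-pair distance asymptotics, as in \cite{BHH09}, to uniform control over all $n$ (resp.\ $n^2$) pairs of endpoints: the CMJ coupling is sharpest when $M_a,M_b=\Theta_p(1)$, but the extremal pair realizing $\flood_w(G)$ (resp.\ $\diam_w(G)$) is precisely the one with atypically small martingale limits, where the coupling degrades. I handle this by decoupling the initial stalling phase, controlled through the extremes of $\mathrm{Exp}(\dmin)$ variables over $V_{\min}$, from the bulk exploration, in which the CMJ coupling of \cite{BHH09} can be restarted after the stall and applied uniformly to the remaining graph.
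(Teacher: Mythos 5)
Your overall strategy coincides with the paper's: a per-vertex ``stall'' term of order $\log n/\dmin$ coming from minimum-degree vertices whose incident edges are all heavy, plus a bulk term $\tfrac{\log n}{2(\nu-1)}$ per endpoint from growing balls to size roughly $\sqrt{n\log n}$ and a birthday-type intersection argument. The genuine gap is in how you make the upper bound uniform over all $n$ (resp.\ $n^2$) starting points. What is needed there is, for \emph{every} vertex $u$, a bound of the form $\PP\bigl(T_u(\beta_n)\geq \tfrac{(1+\epsilon)\log n}{2(\nu-1)}+l\bigr)=o(n^{-1}+e^{-\dmin l})$ (the paper's Proposition~\ref{prop-up-bis}) valid for the exploration on the configuration model itself, i.e.\ with $n$-dependent forward-degree laws, depletion of half-edges and collisions (tree excess) taken into account. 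Your key ingredient is instead a left-tail bound for the martingale limit of the idealized CMJ process, and your proposed transfer --- ``restart the CMJ coupling of \cite{BHH09} after the stall and apply it uniformly'' --- does not deliver the required estimate: the coupling in \cite{BHH09} is controlled only for a typical (uniformly chosen) source and comes with $o(1)$ error probabilities, which cannot survive a union bound over $n$ vertices. This is precisely where the paper has to redo the branching-process approximation quantitatively: stochastic domination of the forward degrees by i.i.d.\ truncated size-biased variables valid for $\beta_n$ steps (Lemma~\ref{lem-coupl}), binomial control of the tree excess (Lemma~\ref{lem:upX}, Lemma~\ref{lem-R1}), the event that $S_k(a)\gtrsim(\nu-1)k$ up to $k=\beta_n$ with failure probability $o(n^{-3/2})$ (Lemma~\ref{lem-R}), and exponential-moment computations splitting the range $k\leq\alpha_n$ (which produces the $e^{-\dmin l}$ term) from $\alpha_n\leq k\leq\beta_n$. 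Without an argument of this quantitative type, the asserted bound $\PP(M_v\leq e^{-(\nu-1)s})\leq Ce^{-ds}$, even if true for the limiting process, does not yield the theorem.

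On the lower bound your construction is essentially the paper's (a ``bad'' vertex is one of degree $\dmin$ all of whose incident weights exceed $s_n=\tfrac{1-\epsilon}{\dmin}\log n$), but ``plugging into the distance heuristic'' is not a proof. Besides the stall, you must show that the stalled vertex is at weighted distance at least $\tfrac{(1-\epsilon)\log n}{\nu-1}$ \emph{beyond} the stall from the other endpoint, i.e.\ that the balls grown from the neighbors of the two chosen vertices do not meet before time $\tfrac{(1-\epsilon)\log n}{2(\nu-1)}$, and --- crucially --- that this non-intersection is not affected by conditioning on the stall events; the paper handles this through Proposition~\ref{prop-low} together with the observation that the incident-edge weights are independent of the rest of the exploration, and through first/second-moment counts of bad vertices (your ``essentially independent'' extreme-value step is the same Chebyshev computation in disguise). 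These points are fixable along the paper's lines, but as written both bounds lean on the non-rigorous identity $\dist_w(a,b)\approx(\log n-\log M_a-\log M_b)/(\nu-1)$, which is not available uniformly over pairs.
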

In the particular case where $G$ is a random
$r$-regular graph with $r\geq 3$, we recover a result first proved in
\cite{ding09} concerning the weighted diameter.
We refer to \cite{amle} for an extension of this result when $\dmin$
can be smaller than 3.

In order to further compare our result with existing ones, we reproduce here a
result of Bhamidi, van der Hofstad, Hooghiemstra~\cite{BHH09}:
\begin{theorem}[\cite{BHH09}]\label{Th-FPP}
Consider a random graph $G(n, (d_i)_1^n)$ where the degrees $d_i$ satisfy Condition \ref{cond-dil}. Let $a, b$ be two uniformly chosen vertices in this graph. Then there exists a random variable $V$ such that
\begin{eqnarray}\label{eq-fpp}
\dist_w(a,b) - \frac{\log n}{\nu -1} \stackrel{d}{\longrightarrow} V,
\end{eqnarray}
where $\stackrel{d}{\longrightarrow}$ is the convergence in distribution.
\end{theorem}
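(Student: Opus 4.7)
The plan is to reduce the typical-distance problem to a collision event between two independent continuous-time branching processes (CTBPs) grown from $a$ and $b$. Under i.i.d.\ $\text{Exp}(1)$ edge weights and the configuration model construction, one may build the shortest-weight tree (SWT) rooted at $v$ in continuous time by attaching an independent $\text{Exp}(1)$ clock to each unpaired half-edge in the current frontier: when a clock rings, the half-edge is matched uniformly with some unmatched half-edge, the reached vertex is added to the tree, and its remaining half-edges (whose number has mean $\nu$, by definition of $q$) start fresh clocks. Up until the first self-collision or inter-tree meeting, this dynamic agrees with an age-dependent branching process with reproduction intensity $\nu\,e^{-t}\,dt$, so its Malthusian parameter equals $\alpha=\nu-1$. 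The assumption $\dmin\geq 3$ makes the branching supercritical without extinction, so by Jagers' theory the frontier size $Z_v(t)$ satisfies $Z_v(t)\sim W_v\,e^{(\nu-1)t}$ for an a.s.\ positive random variable $W_v$.

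I would then run this coupling jointly from $a$ and $b$ up to a stopping time at which both trees reach size $n^{1/2-\epsilon}$. Conditions (iii)--(iv) of Condition \ref{cond-dil} (bounded second moment and $\max_i d_i = O(n^{1/2-\tau})$) ensure that the two explorations remain asymptotically independent and well-approximated by two independent CTBPs, so the pair $(W_a,W_b)$ converges jointly in distribution to two independent copies of the CTBP martingale limit. Beyond this cut-off I would pass to a Poisson description of the inter-tree meeting: given $Z_a(t),Z_b(t)$, the instantaneous rate at which a frontier half-edge of $\text{SWT}_a$ is paired with a frontier half-edge of $\text{SWT}_b$ equals $(1+o(1))\cdot 2Z_a(t)Z_b(t)/(\lambda n)$, with $\lambda n$ the total number of half-edges. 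Substituting the CTBP asymptotics and integrating yields cumulative collision intensity
$$\Lambda(t)=\frac{W_a W_b\,e^{2(\nu-1)t}}{\lambda n(\nu-1)}\bigl(1+o(1)\bigr),$$
so the collision time $T_n$ is determined by $\Lambda(T_n)=E$ with $E$ an independent $\text{Exp}(1)$ random variable. Since $\dist_w(a,b)=2T_n+O(1/n)$ (the bridging edge contributes negligibly), inverting gives
$$\dist_w(a,b)-\frac{\log n}{\nu-1}\stackrel{d}{\longrightarrow}V=\frac{\log(\lambda(\nu-1)E)-\log(W_a W_b)}{\nu-1}.$$

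The main technical obstacle is controlling the branching-process coupling at scales approaching $\sqrt{n}$: the naive coupling is tight only up to $n^{1/2-\epsilon}$, since beyond that the uniform matching begins to feel the depletion of free half-edges and self-collisions within a single tree become non-negligible. The standard remedy is to split the analysis into a \emph{growth phase} handled by the CTBP approximation and a subsequent \emph{matching phase} for inter-tree pairing governed by a thinned Poisson process, and to quantify the total-variation cost of this split using the second-moment and maximum-degree bounds of Condition \ref{cond-dil}. A secondary subtlety is the joint asymptotic independence of $W_a$ and $W_b$, which follows because at the cut-off time the union of the two SWTs still occupies only a vanishing fraction of all half-edges, so the couplings from $a$ and from $b$ do not interfere.
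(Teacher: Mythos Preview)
The paper does not prove Theorem~\ref{Th-FPP}: it is quoted verbatim from \cite{BHH09} purely to contrast the typical weighted distance with the paper's own flooding and diameter results (Theorem~\ref{thm-main}). The only related material in the paper is the explicitly heuristic discussion in Section~\ref{sse:proof-bp}, which explains informally why the scale $\frac{\log n}{\nu-1}$ appears but makes no attempt to identify a limiting random variable $V$ or to establish convergence in distribution.

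That said, your sketch is a faithful summary of the actual argument in \cite{BHH09}: the coupling of the two shortest-weight explorations to independent continuous-time Markov branching processes with Malthusian parameter $\nu-1$, the a.s.\ positive martingale limits $W_a,W_b$ (guaranteed here by $\dmin\geq 3$), the Poissonization of the inter-tree collision with intensity proportional to $Z_a(t)Z_b(t)/(\lambda n)$, and the resulting identification of $V$ in terms of $W_aW_b$ and an independent exponential, are precisely the ingredients of that paper. The technical caveats you flag (tightness of the CTBP coupling only up to size $n^{1/2-\epsilon}$, the need to split into a growth phase and a matching phase, and the asymptotic independence of the two explorations) are also the genuine difficulties addressed there. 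So there is nothing in the present paper to compare your proposal against, but as a blueprint for the cited proof it is essentially correct.
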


For completeness, we also include results of {\small van der Hoftstad, Hooghiemstra, Mieghem~\cite{HHM05}} and {\small Fernholz, Ramachandran~\cite{fern07}} for the typical distance and the diameter in $G(n, (d_i)_1^n)$:

\begin{theorem}[\cite{HHM05, fern07}]\label{th:dist}
Consider a random graph $G(n, (d_i)_1^n)$ where the degree sequence $(d_i)_1^n$ satisfies Condition \ref{cond-dil}.
Let $a, b$ be two uniformly chosen vertices in this graph. Then we have
\begin{eqnarray*}
\label{eq:typG}\frac{\dist(a,b)}{\log n} \stackrel{p}{\longrightarrow} \frac{1}{\log \nu}, \ \ \text{and} \ \
\label{eq:diamG}\frac{\diam (G(n, (d_i)_1^n))}{\log n} \stackrel{p}{\longrightarrow} \frac{1}{\log \nu}.
\end{eqnarray*}
\end{theorem}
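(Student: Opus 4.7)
The plan is to analyze the local neighborhood structure of $G(n, (d_i)_1^n)$ via a coupling of the breadth-first-search (BFS) exploration with a Galton--Watson branching process (BP), then combine this with a birthday-type intersection argument for the typical distance and a first-moment bound for the diameter. By construction of the configuration model, the BFS tree rooted at a uniformly chosen vertex $a$ can be coupled with a BP whose root has offspring law $p$ and whose subsequent generations have the size-biased shifted law $q$ of (\ref{eq:defq}), with mean $\nu$. Condition \ref{cond-dil}(iii)--(iv) ensures that this coupling is accurate up to exploration sizes of order $o(\sqrt{n})$: the probability that a newly matched half-edge lands inside the already-explored set is at most (explored half-edges)$/m^{(n)}$, which is $o(1)$ in that regime.

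For the typical distance, I would run BFS simultaneously from $a$ and $b$. A standard second-moment argument on the BP gives that the balls $B_k(a)$ and $B_k(b)$ have sizes of order $\nu^k$ with probability arbitrarily close to one, as long as $\nu^k = o(\sqrt{n})$. The lower bound follows by stopping at generation $k_- = (1-\varepsilon)\tfrac{\log n}{2\log \nu}$: a union bound on the uniform matching shows that no pairing connects the two balls, so $\dist(a,b) \geq 2 k_-$ w.h.p. For the upper bound, at $k_+ = (1+\varepsilon)\tfrac{\log n}{2\log \nu}$ we have $|B_{k_+}(a)|\,|B_{k_+}(b)| \gg n$, and a direct variance calculation on the number of pairings between the two BFS boundaries yields that the two balls are connected with probability $1-o(1)$, giving $\dist(a,b) \leq 2 k_+ + 1$.

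For the diameter, the lower bound $\diam(G) \geq \dist(a,b)$ is immediate. The upper bound requires uniform control over all $\binom{n}{2}$ pairs. My plan is a first-moment argument on the number of pairs $(u,v)$ with $\dist(u,v) > (1+\varepsilon)\tfrac{\log n}{\log \nu}$, which reduces to showing that for each vertex $v$ the BFS ball reaches size $n^{1/2+\delta}$ by generation $(1+\varepsilon/2)\tfrac{\log n}{2\log \nu}$ with failure probability $o(n^{-2})$. The key input is Condition \ref{cond-dil}(v): since $d_{\min}\geq 3$, we have $p_1=p_2=0$ and hence $q_0=q_1=0$, so the BP has at least two offspring at every node, and large-deviation bounds for the generation time to reach a given size then yield the required sub-polynomial failure probabilities for every starting vertex.

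The main technical obstacle is precisely this uniform-over-vertices BFS growth: for the typical distance only two explorations need to behave well, whereas the diameter demands all $n$ starting points simultaneously, with super-polynomially small failure probability per vertex. Without the minimum-degree assumption $d_{\min} \geq 3$, the graph would contain long paths through low-degree vertices, yielding an additional correction of order $1/\log(1/q_1)$ in the diameter constant (cf.\ \cite{fern07}); Condition \ref{cond-dil}(v) is exactly what kills this correction and forces the diameter and typical-distance constants to coincide.
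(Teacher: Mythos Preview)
The paper does not prove Theorem~\ref{th:dist}: it is quoted verbatim from \cite{HHM05,fern07} purely for comparison with the authors' own weighted results, and no argument for it appears anywhere in the paper or its appendix. So there is no in-paper proof to compare your proposal against.

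That said, your sketch is essentially the route taken in the cited references, and the typical-distance half is fine as written. For the diameter upper bound, one point deserves care. You invoke ``large-deviation bounds for the generation time to reach a given size'' to get failure probability $o(n^{-1})$ per starting vertex, relying on $q_0=q_1=0$. But $q_0=q_1=0$ only gives the deterministic bound $Z_k\ge 2^k$, which reaches $n^{1/2+\delta}$ at generation $\tfrac{(1/2+\delta)\log n}{\log 2}$, not $\tfrac{(1/2+\delta)\log n}{\log\nu}$; when $\nu>2$ this is too slow to yield the constant $1/\log\nu$. What actually works (and what \cite{fern07} does) is a two-phase argument: use the deterministic doubling for the first $O(\log\log n)$ generations to reach polylogarithmic size, and only then apply Chernoff step-by-step to get growth rate $\nu$ with failure $e^{-c\,\mathrm{polylog}\,n}=o(n^{-C})$. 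The initial phase costs only $o(\log n)$ generations and does not affect the prefactor. Your last paragraph correctly identifies that without $d_{\min}\ge 3$ an extra additive term appears; it is precisely this initial phase that would then contribute at leading order.
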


Our Theorem \ref{thm-main} is obviously consistent with the
asymptotic for the typical weighted distance on random graphs given by (\ref{eq-fpp}). However contrary to the case without weight, we see that the asymptotics for the weighted flooding time or diameter are not the same as for the typical weighted distance between two uniformly chosen vertices.
The appearance of the common factor $\frac{\log n}{\nu -1}$ is quite easy to understand at an heuristic level: if one explores the neighborhood of a given vertex consisting of all vertices at (weighted) distance less than $t$, then this exploration process behaves like a continuous time Markov branching process (see \cite{athney} for a precise definition) which is known to grow exponentially fast like $e^{(\nu -1)t}$. In particular at time $\frac{\log n}{2(\nu-1)}$, it reaches the size of the order of $\sqrt{n}$. In particular, if one considers two such exploration processes started from $a$ and $b$, then by that time they should intersect with great probability. This explains why the typical weighted distance is of the order $\frac{\log n}{\nu-1}$. We give a more precise statement of this heuristic in Section~\ref{sse:proof-bp}.
When considering the weighted flooding time, we consider a case where
one exploration process is started from a typical vertex whereas the
other starting point is chosen in order to get a bad scenario in the
sense that the exploration process started from this vertex grows
slowly. Indeed, the bad scenario corresponds to a starting point
having degree $\dmin$ and large weights on all its incident
edges. This event gives the contribution $\frac{\log n}{\dmin}$. We
refer to Section \ref{sse:low} for a formal treatment of this
argument. Of course, to compute the weighted diameter, one has to
consider a case where both starting points correspond to bad scenarios
and then obtains the contribution $2\frac{\log n}{\dmin}$. We see that if one is interested in passing the information between two typical vertices, it can be achieved in time of the order $\frac{\log n}{\nu-1}$ and there is a price (in time) of $\frac{\log n}{\dmin}$ to pay if one wishes to pass the information to everyone from a typical vertex and another price (in time) of $\frac{\log n}{\dmin}$ to pay if one wishes to pass the information to everyone from a vertex in a worst case scenario.
As shown by Theorem \ref{th:dist}, there is no such discrepancy when
there are no weights, i.e. if the vertices are synchronized so that
the process can be run in slotted rounds. In such a case, the
exploration process behaves like a standard Galton-Watson branching
process and bad scenarios (corresponding to slow growth) have very low
probability \cite{vitali} so that they do not contribute in the large
$n$ limit.

\section{ Broadcasting in random regular graphs }\label{se:br}
In this section we elaborate on another aspect of our result. By
comparing our main Theorem \ref{thm-main} with Theorem \ref{th:dist}
in the case of random regular graphs, we see that the weighted
flooding time is actually smaller than the graph
distance flooding time. As for the diameter, this is also valid if $r\geq 6$. With previous discussion, the
heuristic explanation of this phenomenon is clear: the random weights
introduce variance that allows for the branching process
(approximating the exploration process) to grow faster than without
weights. Even if the weights have an average of one, weights with
small values allow the branching process to grow faster than with
constant weight equal to one. Of course the variability of the weights
has also a drawback when one looks at worst case scenario which
correspond to the factors $\frac{\log n}{\dmin}$. However in the case
of random regular graphs the advantages of variance exceeds its
drawback, and the weighted flooding time is smaller than the
graph-distance flooding time. Note that this will not be always true
in the general case, e.g. when $\nu$ is much bigger than $\dmin$.
We now concentrate on one important practical implication of this
phenomenon.

We consider the asynchronous analogue of the standard {\em phone
  call model}  \cite{P87}. In continuous-time, we assume that each
node is endowed with a Poisson process with rate $1$ and that at the
instants of its corresponding Poisson process a node wakes up and
contacts one of its neighbors uniformly at random. We consider the
well-studied push model. In this model, if a node $i$ holds the message it
passes it to its randomly chosen neighbor regardless of its
state. Note that this may yield an unnecessary transmission (if the
receiver already had the message). As in the case of
the standard discrete-time phone call model, we are interested in the
performance of such an information dissemination routine in terms of
the time it takes to inform the whole population.
We denote this time by $\text{ABT}(G)$ for asynchronous broadcast time
with one initial informed node chosen uniformly at random among the
vertices of $G$.

We restrict ourselves to $r$-regular graphs,
i.e. graphs where each node has degree $r\geq 3$, so that $p_r=1$ in
Condition \ref{cond-dil}.
As shown in Section~\ref{sec:app-cor}, the dynamic evolution of informed nodes then corresponds
to the flooding time with i.i.d. weights on edges distributed
according to an exponential distribution with mean $r$. The fact that
the graph is regular, is crucial to get this property and this is the reason why we require $G$ to be a $r$-regular graph.
Hence, our Theorem~\ref{thm-main} allows us to analyze the asynchronous broadcast algorithm for these graphs and
we get the following corollary:
\begin{corollary}\label{th:nttiming}
Let $G\sim \cG(n,r)$ be a random $r$-regular graph with $n$ vertices.
Then
w.h.p.
$$\text{ABT}(G)=2\left(\frac{r-1}{r-2}\right) \log n + o(\log n).$$
%
\end{corollary}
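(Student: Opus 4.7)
The plan has two stages: first reduce the asynchronous broadcast process on an $r$-regular graph to a weighted flooding problem with i.i.d.\ exponential edge weights, then apply Theorem \ref{thm-main} after a trivial rescaling.

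\textbf{Stage 1 (Reduction).} Each node carries a rate-$1$ Poisson clock and, on ringing, selects one of its $r$ neighbors uniformly at random. By Poisson thinning, this is equivalent to equipping each directed edge $(i,j)$ with an independent Poisson process of rate $1/r$, recording the times at which $i$ would push information to $j$. If $i$ becomes informed before $j$, the delay for $i$ to actually push to $j$ is the first arrival of this process after $i$'s infection time, which by memorylessness is $\mathrm{Exp}(1/r)$ and independent of everything that happened up to that infection time. Because $G$ is $r$-regular, the analogous delay in the reverse direction $(j,i)$ has exactly the same law. One can then couple: replace each undirected edge $\{i,j\}$ by a single independent $\mathrm{Exp}(1/r)$ weight $w_{ij}$, and verify that the joint law of the infection times $(T_i)_{i\in V}$ is unchanged. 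Regularity is essential here; with heterogeneous degrees the two directional rates $1/\deg(i)$ and $1/\deg(j)$ would differ and this symmetrisation would fail. Consequently,
$$\text{ABT}(G)\stackrel{d}{=}\flood_w(G)\qquad\text{with i.i.d.\ }\mathrm{Exp}(1/r)\text{ edge weights}.$$

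\textbf{Stage 2 (Rescaling and applying Theorem \ref{thm-main}).} For a random $r$-regular graph, Condition \ref{cond-dil} holds with $p_r=1$, so $\lambda=r$, the size-biased law places all mass at $r-1$, giving $\nu=r-1$, and $\dmin=r$. Since an $\mathrm{Exp}(1/r)$ variable equals $r$ times an $\mathrm{Exp}(1)$ variable, the flooding time with mean-$r$ weights is exactly $r$ times the one with mean-$1$ weights. Theorem \ref{thm-main} then gives
$$\frac{\text{ABT}(G)}{\log n}\xrightarrow{p} r\left(\frac{1}{\nu-1}+\frac{1}{\dmin}\right)=\frac{r}{r-2}+1=2\cdot\frac{r-1}{r-2},$$
which is the claimed asymptotic. (Lemma \ref{lem:connect} ensures that w.h.p.\ the graph is connected, so $\text{ABT}$ is finite.)

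\textbf{Main obstacle.} The only delicate point is a clean verification of Stage 1: one must show that the coupling preserves the \emph{joint} distribution of the entire infection-time vector $(T_i)_{i\in V}$, not just marginals. The argument reduces to a repeated application of the strong Markov property for exponential clocks, together with the fact that in an $r$-regular graph the rates along the two orientations of an edge are identical, so that conditioning on which endpoint is informed first does not alter the distribution of the edge-crossing delay. Once this is in place, the remainder is purely a substitution into Theorem \ref{thm-main} and a short arithmetic simplification.
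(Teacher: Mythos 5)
Your proposal is correct and follows essentially the same route as the paper: reduce $\text{ABT}(G)$ to weighted flooding with i.i.d.\ exponential mean-$r$ edge weights (using memorylessness and $r$-regularity), then apply Theorem~\ref{thm-main} with $\nu=r-1$, $\dmin=r$ and rescale by $r$. The paper settles the coupling point you flag by defining $w_e$ directly from the broadcast realization as the waiting time, after the first-informed endpoint of $e$ becomes informed, until it contacts the other endpoint; memorylessness then yields both the i.i.d.\ exponential mean-$r$ law of the weights and the pathwise identity $\mathcal{I}(a,t)=B_w(a,t)$.
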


The classical randomized broadcast model was first investigated by Frieze and Grimmett \cite{FG85}.
Given a graph $G = (V, E)$, initially a piece of information is placed on one of the nodes in $V$.
Then in each time step, every informed node sends the information to another node, chosen independently and uniformly at random among its neighbors. The question now is how many time-steps are needed such that all nodes become informed. Note that this model requires nodes to be synchronized.
It was shown by Frieze and Grimmett \cite{FG85} and Pittel \cite{P87} that for the complete graph $K_n$ the number of steps
needed to inform the whole population scales as $\log_2 n + \log n +
o(\log n)$ with high probability.
Fountoulakis et al. \cite{FHP09} proved that in the case of Erd\H{o}s-R\'enyi random graphs $G(n,p_n)$,
if the average degree, $np_n$, is slightly larger than $\log n$, then the broadcast time essentially coincides with
the broadcast time on the complete graph.
For any $r$-regular graphs it has been shown in \cite{ES09} that this
algorithm requires at least $ \left(\frac{1}{\log (2 - 1/r)} -
  \frac{1}{r \log (1 - 1/r)}\right) \log n + o(\log n)$ rounds to
inform all nodes of the graph, w.h.p. (the randomness comes here from
the choice of the neighbor to which the information is pushed).
Fountoulakis and Panagtotou in
\cite{FK10} have  recently  shown that in the case of random regular
graphs, the process completes in
$\left(\frac{1}{\log (2 (1 - 1/r))} - \frac{1}{r \log (1 -
    1/r)}\right) \log n + o(\log n)$ rounds w.h.p.

\begin{figure}[htb]
\begin{center}
\includegraphics[width=0.8\textwidth, height=0.3\textheight]{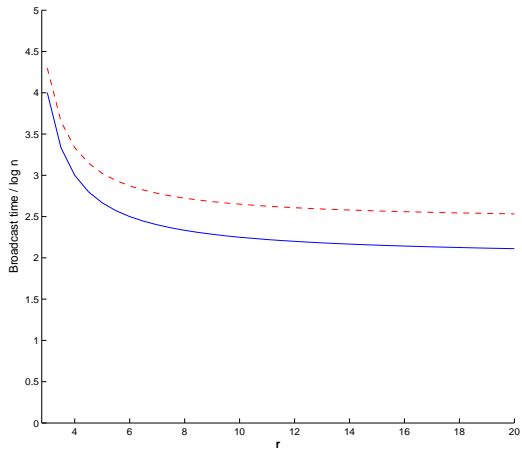}
\end{center}
\caption{Comparison of the time to broadcast in the
  synchronized version (dashed) and with exponential random weights (plain)}
\label{fig:broadcast}
\end{figure}

Note that if instead of independent Poisson clocks of rate one, we
take a deterministic process with slots of size one, our model is
exactly the one studied in \cite{FK10}. Hence locally, both processes
behave similarly: when a node receives the information, it will need
on average a time of
$\left(r+\frac{r}{2}+\dots+\frac{r}{r-1}+1\right)$ to transmit it to
all its neighbors (including possibly informed ones).
Figure \ref{fig:broadcast} shows the comparison between results in
\cite{FK10} and our Theorem \ref{th:nttiming}: in both cases, the time
to broadcast is of the order of $\log n$ but the prefactors differ and
are given by the two curves for various values of $r$.
We see that the asynchronous version is always faster than the synchronized one.
This result while surprising is in agreement with the discussion comparing our Theorem \ref{thm-main} with Theorem \ref{th:dist}. The process of diffusion takes advantage of the variance of the exponential random delays and allows to broadcast the information faster in a decentralized and asynchronous way!

\section{Proof of Theorem~\ref{thm-main}}\label{se:proof}
In this section we present the main steps of the proof of Theorem~\ref{thm-main}.
The main idea of the proof is to grow simultaneously balls from each
vertex so that the diameter is twice the time when the last two balls
intersect. Indeed, instead of taking a graph at random and then analyzing the balls, we use a standard coupling argument in random graphs theory consisting in building the balls and in the same time the graph. We present this coupling in the following Section \ref{sec:explor} and Section~\ref{sec:app-coup}. We then present the branching process approximation in Section \ref{sse:proof-bp}. This section is not technically required for the proof and is not written in a rigorous way. It is included to give some intuition for the proof of the upper bound in Section \ref{sse:up} and the lower bound in Section \ref{sse:low}.

\subsection{The exploration process.}\label{sec:explor}
For $t>0$, we define the $t$-radius neighborhood of a vertex $a$ as
\begin{eqnarray*}
B_w(a,t)&=&\{b,\:\dist_w(a,b)\leq t\},
\end{eqnarray*}
and the first time at which the set $B_w(a,t)$ reaches size $k$ is
denoted by:
\begin{eqnarray*}
T_a(k) = \min \{t: |B_w(a,t)| \geq k \}.
\end{eqnarray*}

Fix a vertex $a$, and consider the continuous-time exploration
process: at time $t=0$, we have a neighborhood consisting only of $a$, and
for $t>0$, the neighborhood is precisely $B_w(a,t)$.

\noindent We now give an equivalent description of the process:
\begin{itemize}
\item Start with $B = \{a\}$, where $a$ has $d_a$ half-edges. Reveal
  any matchings and weights of these $d_a$ half-edges connecting them
  amongst themselves (creating self-loops at $a$). The remaining
  unmatched half-edges are stored in a list $L$.

\item Repeat the following exploration step as long as the list $L$ is
  not empty:
\item[] Given there are $\ell\geq 1$ half-edges in the current list, say
  $L=(h_1,\dots, h_\ell)$, let $\Psi \sim \text{Exp}(\ell)$ be an
  exponential variable with parameter $\ell$. After time $\Psi$ select
  a half-edge from $L$ uniformly at random, say $h_i$. Remove $h_i$
  from $L$ and match it to a
  uniformly chosen half-edge in the entire graph excluding $L$, say $h$. Add
  the new vertex (connected to $h$) to $B$ and reveal the
  matchings (and weights) of any of its half-edges whose match is
  also in $B$. More precisely, let $d$ be the degree of this new
  vertex and $2x$ the number of matched half-edges in $B$ (including
  the matched half-edges $h_i$ and $h$). There is a
  total of $m-2x$ unmatched half-edges. Consider one of the
  $d-1$ half-edges of the new vertex (excluding $h$ which is connected
  to $h_i$); with probability $(\ell-1)/(m-2x-1)$ it is matched with a
  half-edge in $L$ and with the complementary
  probability it is matched with an unmatched half-edge outside
  $L$. In the first case, match it to a uniformly chosen half-edge of
  $L$ and remove the corresponding half-edge from $L$. In the second
  case, add it to $L$. We proceed in the same manner for all the
  $d-1$ half-edges of the new vertex.
\end{itemize}

Let $B(a,t)$ and $L(a,t)$ be respectively the set of vertices and the
list generated by the above procedure at time $t$.
Considering the usual configuration model and using the memoryless
property of the exponential distribution, we have that
$B_w(a,t)=B(a,t)$ for all $t$.

Let $\tau_i$ denote the time of the $i$-th  exploration step in the
above continuous-time exploration process, i.e.
$T_a(i)=\tau_{i-1}$.
Assuming $L(a,\tau_i)$ is not empty, at time $\tau_{i+1}$, we match a uniformly
chosen half-edge from the set $L(a,\tau_i)$ to a uniformly chosen
half-edge among all other half-edges, excluding those in
$L(a,\tau_i)$. Let $\cF_{t}$ be the $\sigma$-field generated by the
above process until time $t$.
Given $\cF_{\tau_i}$, $\tau_{i+1} - \tau_i$ is an
exponential random variable with rate $S_i(a)=|L(a,\tau_i)|$ the size
of the list  consisting of unmatched half-edges in $B(a,\tau_i)$.
Let $i^*=\min\{i,\: S_i(a)=0\}\leq n$, then we set
$S_i(a)=0$ for all $i^*\leq i\leq n$.

For $i\in [1,i^*]$, we define $\hd_i$ the forward degree i.e. the
degree minus one, of the vertex added during the $i$-th exploration step and let
\begin{eqnarray}
\label{eq:defhS}\hS_i(a) := d_a + \hd_1 + ... + \hd_i - i .
\end{eqnarray}
For a connected set $H$, we denote by $\tx(H)$ the tree excess of
$H$ which is the maximum number of edges that can be deleted from the
induced subgraph on $H$ while still keeping it connected.
Let $X_i(a) : = \tx(B(a,\tau_i))$ for $i\leq i^*$, so that we have for
$i\leq i^*$:
\begin{eqnarray}
\label{eq:S(a)}S_i(a) = \hS_i(a)  - 2 X_i(a).
\end{eqnarray}
Note that $i^*$ and the sequence $\hd_i$
depend on $a$.

We define
\begin{eqnarray}
\label{eq:defab}\alpha_n  = \log^3 n \mbox{ and, } \beta_n=
3\sqrt{\frac{\lambda}{\nu-1} n \log n}.
\end{eqnarray}

An important ingredient in the proof will be the comparison of the variables $\{\hd_1 , ... , \hd_k \}$, for an appropriately chosen $k$ , to an i.i.d. sequence.

\subsection{Coupling the forward degrees sequence $\hd_i$.}\label{sec:app-coup}

The sequence $(\hd_i)_{i\leq i^*}$ can be constructed as follows.
Initially, associate to vertex $j$ a bin containing a set of $d_j$
white balls. At step $0$, color the balls corresponding to vertex $a$
in red. Subsequently, at step $k \leq i^*$, choose a ball uniformly at
random among all white balls; if the ball is drawn from node $j$'s bin
then set $\hd_k = d_{j} - 1 $, and color all the balls in the
bin in red. If $i^*<n$, there are still white balls at step $i^*+1$ and
we complete the sequence $\hd_i$ for $i\in [i^*+1,n]$ by continuing
the sampling described above, so that we obtain a sequence
$(\hd_i)_{i=1}^n$ coinciding with the sequence defined in Section
\ref{sec:explor} for $i\leq i^*$. We also extend the sequence $\hS_i(a)$ for $i> i^*$
thanks to (\ref{eq:defhS}) and we set $X_i(a)=X_{i^*}(a)$ for all
$i>i^*$. Note that with these conventions, the relation
(\ref{eq:S(a)}) is not valid for $i>i^*$ but we have $S_i(a)\leq
\hS_i(a)-2X_i(a)$.

We now present a coupling of the variables $\{\hd_1 , ... , \hd_k \}$
valid for $k\leq \beta_n$ defined in (\ref{eq:defab}), with an
i.i.d. sequence, that we now define.
First, we denote the order statistics of the degrees of the nodes of
the graph by
\begin{eqnarray*}
d_{(1)}\leq d_{(2)}\leq \dots \leq d_{(n)}\:.
\end{eqnarray*}
Let $\um^{(n)}=\sum_{i=1}^{n-\beta_n} d^{(n)}_{(i)}$ and $\upi^{(n)}$
the size-biased empirical distribution with the $\beta_n$ highest
degrees removed, i.e.
\begin{eqnarray*}
\upi^{(n)}_k = \frac{\sum_{i=1}^{n-\beta_n} (k+1) \ind_{\left(d^{(n)}_{(i)}=k+1\right)}}{\um^{(n)}}.
\end{eqnarray*}
Similarly, let $\bm^{(n)}=\sum_{i=(\beta_n+1)\Delta_n}^{n}
d^{(n)}_{(i)}$ and $\bpi^{(n)}$ the size-biased empirical distribution
with the $(\beta_n+1)\Delta_n$ lowest degrees removed, i.e.
\begin{eqnarray*}
\bpi^{(n)}_k = \frac{\sum_{i=(\beta_n+1)\Delta_n}^{n} (k+1) \ind_{\left(d^{(n)}_{(i)}=k+1\right)}}{\bm^{(n)}}.
\end{eqnarray*}
Note that by Condition \ref{cond-dil}, we have
$\beta_n\Delta_n=o(n)$ implying that both distributions $\upi^{(n)}$
and $\bpi^{(n)}$ converge to size biased distribution $q$ defined in
(\ref{eq:defq}).

For two real-valued random variables $A$ and $B$, we write $A\leq_{st} B$ if
for all $x$, we have $\PP(A>x)\leq \PP(B>x)$. If $C$ is another random
variable, we write $A\leq_{st} B|C$ if for all $x$, $\PP(A>x)\leq
\PP(B>x|C)$ a.s.


\begin{lemma}\label{lem-coupl}
For an uniformly chosen vertex $a$ and for $i\leq \beta_n$, we have
\begin{eqnarray}
\label{eq:coupl}\uD^{(n)}_i \leq_{st} \hd_i | (d_a,\hd_1,\dots, \hd_{i-1})\leq_{st}\bD^{(n)}_i
\end{eqnarray}
where $\uD^{(n)}_i$ (resp. $\bD^{(n)}_i$) are i.i.d. with distribution
$\upi^{(n)}$ (resp. $\bpi^{(n)}$).
In particular, we have
\begin{eqnarray*}
\sum_{k=1}^{i} \uD^{(n)}_k \leq_{st} \sum_{k=1}^{i} \hd_k \leq_{st} \sum_{k=1}^{i} \bD^{(n)}_k.
\end{eqnarray*}
\end{lemma}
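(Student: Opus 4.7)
The plan is to write the conditional law of $\hd_i$ as an explicit size-biased survival function over the unvisited vertices, sandwich it between $\upi^{(n)}$ and $\bpi^{(n)}$ by direct term-by-term comparison, and then pass to the sums by a sequential coupling. From the urn description, given the history one has
\[
\PP\!\left(\hd_i > x \,\Big|\, d_a,\hd_1,\dots,\hd_{i-1}\right) = \frac{\sum_{j\notin R_{i-1},\, d_j > x+1} d_j}{m^{(n)} - \sum_{j \in R_{i-1}} d_j},
\]
where $R_{i-1}$ is the set of vertices already sampled (whose multiset of degrees is measurable with respect to the history), and $|R_{i-1}| \leq \beta_n - 1$.

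For the upper half of \eqref{eq:coupl} I would enlarge the numerator by adding back any $j\in R_{i-1}$ and shrink the denominator using $\sum_{R_{i-1}} d_j \leq \beta_n\Delta_n$, to obtain $\PP(\hd_i > x \mid \cdot) \leq \sum_{d_j > x+1} d_j / (m^{(n)} - \beta_n\Delta_n)$. Because $(\beta_n+1)\Delta_n = o(n)$ while the number of vertices of degree $\dmin$ is $\Theta(n)$ (Condition~\ref{cond-dil}(i),(v)), for $n$ large every vertex of degree $>x+1$ has rank $\geq (\beta_n+1)\Delta_n$; hence the numerator of $\bpi^{(n)}(\{>x\})$ equals $\sum_{d_j > x+1} d_j$, and using $\dmin \geq 3$,
\[
\sum_{i'=1}^{(\beta_n+1)\Delta_n - 1} d^{(n)}_{(i')} \geq \bigl((\beta_n+1)\Delta_n - 1\bigr)\dmin \geq \beta_n\Delta_n,
\]
whence $\bm^{(n)} \leq m^{(n)} - \beta_n\Delta_n$ and therefore $\bpi^{(n)}(\{>x\}) \geq \PP(\hd_i > x\mid \cdot)$. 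For the lower half, the key monotonicity is that $d \mapsto (N-d)/(D-d)$ is non-increasing in $d$ when $N \leq D$, so that a short vertex-swap computation shows the survival function is minimized among all $|R_{i-1}| \leq \beta_n - 1$ when $R_{i-1}$ is the top $\beta_n - 1$ degrees; in that extremal case it equals $(S - T^\ast)/(m - T^\ast)$ with $S = \sum_{d_j > x+1} d_j$ and $T^\ast = \sum_{i'=n-\beta_n+2}^n d^{(n)}_{(i')}$, and removing one additional top vertex (to match $\upi^{(n)}$'s cutoff at the top $\beta_n$ ranks) only decreases the ratio by the same monotonicity, yielding $\upi^{(n)}(\{>x\}) \leq \PP(\hd_i > x \mid \cdot)$.

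Finally, to pass from \eqref{eq:coupl} to the stated sum inequalities I would invoke a standard sequential coupling: inductively at step $i \leq \beta_n$, given the revealed history, draw $\bD_i^{(n)}$ (resp.\ $\uD_i^{(n)}$) independently and pair it with $\hd_i$ pointwise so that $\hd_i \leq \bD_i^{(n)}$ (resp.\ $\uD_i^{(n)} \leq \hd_i$) almost surely, which is possible by \eqref{eq:coupl}; summing the coupled inequalities over $i$ gives the sum sandwich. I expect the subtlest point to be the swap case-analysis that pinpoints the extremal removal set in the lower-bound step, together with verifying that the inflated cutoff $(\beta_n+1)\Delta_n$ in the definition of $\bpi^{(n)}$ is exactly the buffer needed to ensure $\bm^{(n)} \leq m^{(n)} - \beta_n\Delta_n$ under Condition~\ref{cond-dil}.
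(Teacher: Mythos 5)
Your proof is correct, but it reaches (\ref{eq:coupl}) by a genuinely different route than the paper. You verify the one-step conditional dominance analytically: you write the conditional survival function of $\hd_i$ exactly as a ratio of degree sums over unvisited vertices and sandwich it between worst-case removal sets, using the monotonicity of $t\mapsto (N-t)/(D-t)$ (for $N\le D$) on the lower side and the crude bounds (full numerator, denominator at least $m^{(n)}-\beta_n\Delta_n\ge\bm^{(n)}$) on the upper side. The paper instead obtains the same dominance by an explicit coupling on the ordered bins: a ball is drawn uniformly from the last $n-(\beta_n+1)\Delta_n$ bins to generate $\bD^{(n)}_i$, and if a red ball is hit it is remapped to the corresponding white ball, which necessarily lies among the first $(\beta_n+1)\Delta_n$ bins because at most $\beta_n\Delta_n$ balls are red; a symmetric construction handles $\uD^{(n)}_i$. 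Your analytic version makes the quantitative role of the two cutoffs explicit (in particular $\bm^{(n)}\le m^{(n)}-\beta_n\Delta_n$), at the price of an extremal-set case analysis; the paper's coupling treats all values of $x$ at once with essentially no computation. For the sum statement the paper cites \cite[Lemma A.3]{fern07} (Lemma \ref{lem-domin}), and your sequential pointwise coupling is exactly the standard proof of that lemma, so that step is equivalent. Two harmless points to tighten: for $i\le\beta_n$ the visited set has $|R_{i-1}|=i\le\beta_n$ vertices (not $\beta_n-1$), which still matches $\upi^{(n)}$'s removal of the top $\beta_n$ degrees and the bound $\sum_{j\in R_{i-1}}d_j\le\beta_n\Delta_n$; and your claim that every vertex of degree $>x+1$ has rank at least $(\beta_n+1)\Delta_n$ holds only when $x+1\ge\dmin$, so the values $x+1<\dmin$ should be dismissed trivially (there both $\PP(\hd_i>x\mid\cdot)$ and $\bpi^{(n)}(\{>x\})$, as well as $\upi^{(n)}(\{>x\})$, equal $1$) rather than via the rank claim; similarly, in the lower-bound swap argument the case where the top-$\beta_n$ set contains vertices of degree $\le x+1$ is the degenerate case $\upi^{(n)}(\{>x\})=0$.
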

\begin{proof}
We fix the sequence of degrees $d_{i}^{(n)}$ and the initial vertex
$a$. We now prove that conditionally on the values
of $(d_a,\hd_1,...,\hd_{j-1})$, the random variable $\hd_j$ is
stochastically smaller than $\bD^{(n)}_j$.
This can be seen by a simple coupling argument as follows.
First order the balls from $1$ to $m$ consistently with the order
statistics, i.e. start by numbering the balls in the bin with the
fewest balls and then move to the larger ones as ordered by the number
of balls they contain.

Given the sequence $(d_a,\hd_1,...,\hd_{j-1})$, color in red the balls
of bins of the corresponding sizes. In order to get a sample for $\bD^{(n)}_j$,
pick a ball at random among all balls in the last $n-(\beta_n+1)\Delta_n$ bins and
set $\bD^{(n)}_j$ to be equal to the size of the selected bin minus one.
If the ball picked is white set $\tilde{d}_j=\bD^{(n)}_j$.
If there are red balls in the last $n-(\beta_n+1)\Delta_n$ bins and if
such a ball is picked, say this is the $\ell$-th ball among these red
balls for the induced order, then set $\tilde{d}_j$ to be the size of
the bin containing the $\ell$-th white ball, minus one. Since $d_a+\hd_1+\dots+\hd_{j-1}\leq \beta_n\Delta_n$,
this ball is in one of the first $(\beta_n+1)\Delta_n$ bins.
In all cases, we have: $\hd_j\leq_{st}\tilde{d}_j\leq \bD^{(n)}_j$
given the sequence $(d_a,\hd_1,...,\hd_{j-1})$.
A similar argument allows to prove that $\uD^{(n)}_j \leq_{st} \hd_j$
given the sequence $(d_a,\hd_1,...,\hd_{j-1})$.

The second statement follows from the following lemma \cite[Lemma A.3]{fern07} .

\begin{lemma}\label{lem-domin}
Let $X_1,...,X_t$ be a random process adapted to a filtration $\mathcal{F}_0 = \sigma[\emptyset], \mathcal{F}_1,...,\mathcal{F}_t$, and let ${\bf \Sigma}_t = X_1+ ...+X_t$. Consider a distribution $\mu$ such that $({X}_{s+1}| \mathcal{F}_s) \geq_{st} \mu$ (resp. $({X}_{s+1}| \mathcal{F}_s) \leq_{st} \mu$) for all $0 \leq s \leq t-1$. Then ${\bf \Sigma}_t$ is stochastically greater (resp. smaller) than  the sum of $t$ i.i.d. $\mu$-distributed random variables.
\end{lemma}
\end{proof}
\subsection{Branching process approximation.}\label{sse:proof-bp}

In this section, we consider the continuous-time Markovian branching process $Z_t$ approximating the exploration process defined in Section~\ref{sec:explor}.
As we will see, the tree excess $X_i(a)$ remains small (compared to $\hS_i(a)$) with very high probability at least when $i$ is not too large. The branching process approximation consists in neglecting this term and considering that the sequence of $\hd$ is a sequence of i.i.d. random variables with distribution given by (\ref{eq:defq}) (which is true asymptotically).
$Z_t$ is started
with one ancestor. Each of the members of the population has an exponential lifetime
(with mean one) and upon her death she gives birth to a random number $\hD$ of
particles, where $\hD$ has distribution (\ref{eq:defq}).
We assume that $\dmin\geq 3$ so that $\hD\geq 2$ a.s.
We now define the split times: the times at which the particles split
(see \cite{athney} III.9).
Let $\hSigma_i=D+\sum_{j=1}^i \hD_j-i$ (where $D$ is the degree of the vertex the process starts from, i.e. $\PP(D=r) = p_r$) and $E_i$ a sequence of independent
exponential random variables with mean one. Under the assumption $\dmin\geq 3$, the
split times are defined by $T_0=0$ and for $i\geq 1$,
\begin{eqnarray}\label{eq:Ti}
T_i = \sum_{j=0}^{i-1}\frac{E_i}{\hSigma_i}.
\end{eqnarray}
Note that $\EE[\hSigma_i]\approx (\nu-1)i$ and it is shown in
\cite{athney} that:
$\lim_{n\to\infty}\frac{T_n}{\log n} = \frac{1}{\nu-1}$.
In particular at time $\frac{\log n}{2(\nu-1)}$, the process reaches
size $\sqrt{n}$, so that for two given vertices of the graph, there is a
high probability that the two balls intersect (see Proposition \ref{prop-up} below). This heuristic argument allows
to understand the typical distance given by (\ref{eq-fpp}) in \cite{BHH09}.

In order to be able to compute the diameter, we need to find $x$ such
that $\PP(T_{\sqrt{n+}}\geq x)\approx \frac{1}{n}$ (here $n+$ is an
informal notation to denote a sequence growing slightly faster than
$n$, like $n\log n$, see the parameter $\beta_n$). Hence we need to study large deviations results for the sequence $\{T_i\}_{i\in \NN}$. Moreover we have to take care of the error introduced by the branching process approximation. This is done by a coupling argument given in Section~\ref{sec:app-coup}. We now give the main technical steps of the proof.

\subsection{Proof of the upper bound.}\label{sse:up}
The following proposition, proved in Section~\ref{sec:app-up}, shows that to bound the
distance between any pair of nodes it suffices to bound the time when
$\beta_n := 3\sqrt{\frac{\lambda}{\nu-1}n\log n}$ nodes have been reached in the exploration process defined in Section \ref{sec:explor} starting from these two nodes.
\begin{proposition}\label{prop-up}
We have w.h.p.
\begin{eqnarray*} \dist_w(u,v) \leq T_u(\beta_n) + T_{v}(\beta_n) \mbox{ , for all
    $u$ and $v$}.\end{eqnarray*}
\end{proposition}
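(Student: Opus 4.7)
The plan is to grow balls of size $\beta_n$ from both endpoints and argue that, with high probability for every pair $(u,v)$, the two balls share a vertex; the triangle inequality then yields the bound. Indeed, if some $w$ belongs to $B_w(u,T_u(\beta_n))\cap B_w(v,T_v(\beta_n))$, then
$$
\dist_w(u,v)\leq \dist_w(u,w)+\dist_w(w,v)\leq T_u(\beta_n)+T_v(\beta_n).
$$

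To exhibit such a $w$, I would use the standard two-phase revelation of the configuration model. First, I would run the exploration process from $u$ (as in Section~\ref{sec:explor}) until $\beta_n$ vertices are discovered; this reveals the induced subgraph on $B(u,T_u(\beta_n))$ together with the list $L(u):=L(u,T_u(\beta_n))$ of unmatched half-edges sticking out of the ball. Conditional on what has been revealed, the residual pairing is a uniform configuration model on the leftover half-edges, so I would then launch a fresh exploration from $v$ on the residual structure. Assuming $v\notin B(u,T_u(\beta_n))$ (otherwise we are trivially done with $w=v$), the exploration from $v$ hits $B(u,T_u(\beta_n))$ precisely when some outward pairing from $L(v,\cdot)$ falls into $L(u)$.

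Two quantitative estimates drive the argument. First, I need a uniform lower bound $|L(u)|\geq (\nu-1-\varepsilon)\beta_n$. By~(\ref{eq:S(a)}) this reduces to bounding $\hS_{\beta_n-1}(u)$ from below and $X_{\beta_n-1}(u)$ from above. Lemma~\ref{lem-coupl} lets me replace $(\hd_k)$ by the i.i.d.\ sequence $(\uD_k^{(n)})$, whose mean tends to $\nu$; a Bernstein-type deviation bound then gives $\sum_k\uD_k^{(n)}\geq(\nu-\varepsilon)(\beta_n-1)$ with probability $1-o(n^{-3})$, uniformly in the initial vertex. For the tree excess, each half-edge exposed during the exploration pairs inside the current ball with probability $O(\beta_n/n)$, yielding $\mathbb{E}[X_{\beta_n-1}(u)]=O(\beta_n^{2}/n)=O(\log n)$; a concentration/Markov argument (of the kind deferred to the appendix) upgrades this to $X_{\beta_n-1}(u)=o(\beta_n)$ with overwhelming probability. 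Second, conditional on $|L(u)|\geq (\nu-1-\varepsilon)\beta_n$, each of the $\beta_n-1$ outward pairings during the exploration from $v$ selects a uniform half-edge from a pool of size $m-O(\beta_n\Delta_n)=\lambda n(1+o(1))$, so it lands in $L(u)$ with probability at least $(\nu-1-\varepsilon)\beta_n/(\lambda n)(1+o(1))$. Hence the probability that none of them falls in $L(u)$ is at most
$$
\exp\!\left(-\frac{(\nu-1-\varepsilon)\,\beta_n^{\,2}}{\lambda n}(1+o(1))\right)=n^{-9(1+o(1))},
$$
using $\beta_n^{\,2}=\tfrac{9\lambda}{\nu-1}n\log n$. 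Summing the two failure probabilities over the $\binom{n}{2}$ pairs yields total failure $O(n^{-7})=o(1)$, proving the claim.

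The main difficulty is the uniform-in-$u$ control of $|L(u)|$: under Condition~\ref{cond-dil} the variance of $\uD_k^{(n)}$ is only bounded by $O(\Delta_n)$, so Chebyshev is too weak to survive a union bound over $n$ starting vertices, forcing one to invoke Bernstein (or to truncate degrees and argue in two stages). The choice of the constant $3$ in $\beta_n=3\sqrt{\lambda n\log n/(\nu-1)}$ is precisely calibrated so that the resulting hitting exponent is $9\log n$, comfortably more than the $2\log n$ needed for the pair-union bound.
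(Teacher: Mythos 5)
Your overall route coincides with the paper's proof in Section~\ref{sec:app-up}: grow both balls until $\beta_n$ vertices are discovered, lower-bound the number of unmatched half-edges attached to the first ball by $(\nu-1-o(1))\beta_n$, bound the probability that all $\beta_n$ uniform pairings made by the second exploration miss this set by $\exp(-(9-o(1))\log n)$ using $\beta_n^2=9\tfrac{\lambda}{\nu-1}n\log n$, and conclude by a union bound; your triangle-inequality reduction and the miss-probability computation are correct and identical to the paper's.

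The genuine gap is in your uniform control of $|L(u)|$, i.e. of the tree excess, which is exactly the content of the paper's Lemma~\ref{lem-R}. First, the claim that each exposed half-edge falls back into the current ball with probability $O(\beta_n/n)$ is not justified: the ball may carry up to $\beta_n\Delta_n=n^{1-\tau+o(1)}$ unmatched half-edges, so the naive domination only gives $X_{\beta_n}(u)\leq_{st}\Bin\bigl(\beta_n\Delta_n,\,O(\beta_n\Delta_n/n)\bigr)$, whose mean $O(\beta_n^2\Delta_n^2/n)$ need not be $o(\beta_n)$ when $\tau<1/4$. (Your $\EE[X_{\beta_n}(u)]=O(\log n)$ is in fact true, but it requires a second-moment computation via the coupling of Lemma~\ref{lem-coupl}, not the per-half-edge bound you state.) Second, and more importantly, an expectation of order $\log n$ plus Markov only yields a per-vertex failure probability of order $\log n/\beta_n=n^{-1/2+o(1)}$, which cannot survive the union bound over the $n$ choices of the starting vertex $u$; you need failure $o(n^{-1})$ per vertex (the paper gets $o(n^{-3/2})$). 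The paper achieves this not by Markov but through the uniform-matching estimate of Lemma~\ref{lem-excess} (Fernholz--Ramachandran): conditionally on $\hS_k(a)$, one has $2X_k(a)\leq_{st}\Bin\bigl(\hS_k(a),\sqrt{\hS_k(a)/n}\bigr)$, to which Chernoff applies and gives the exponentially small tails used in Lemmas~\ref{lem:upX} and~\ref{lem-R}. Supplying this (or an equivalently sharp tail bound for $X_{\beta_n}(u)$, uniform in $u$) is what your sketch is missing. As a minor aside, your concern about Bernstein versus Chebyshev for $\sum_k\uD^{(n)}_k$ is unnecessary: only a lower-tail bound is needed, so the Laplace transform $\EE[e^{-\theta\uD^{(n)}_1}]$ is bounded and plain Chernoff works irrespective of the variance, which is how the paper's Lemma~\ref{lem-upp} proceeds.
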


\noindent Now we give an upper bound (which holds w.h.p.) for the time needed to explore $\beta_n$ nodes from any vertex $a$.

\begin{proposition}\label{prop-up-bis}
For a uniformly chosen vertex $u$ and any $\epsilon>0$, we have
\begin{eqnarray*}
\PP\left( T_u(\beta_n) \geq \frac{(1+\epsilon)\log n}{2(\nu-1)} + l \right) = o(n^{-1} + e^{-\dmin l})  .
\end{eqnarray*}
\end{proposition}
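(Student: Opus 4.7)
The plan is to express $T_u(\beta_n)$ via the exploration of Section~\ref{sec:explor} as a sum of conditionally independent exponentials, peel off the very first jump (which alone is responsible for the $e^{-\dmin l}$ tail), and control the remainder by a Chernoff bound using the stochastic lower bound of Lemma~\ref{lem-coupl}.

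By construction, $T_u(\beta_n)=E_0/S_0(u)+T^{*}$ with $T^{*}:=\sum_{i=1}^{\beta_n-2}E_i/S_i(u)$, the $E_i$ i.i.d.\ $\mathrm{Exp}(1)$, and $E_0$ independent of $T^{*}$. For a uniformly chosen $u$, $S_0(u)=d_u\ge\dmin$ with probability $1-O(1/n)$ (failure needs a self-loop at $u$), so $\PP(E_0/S_0(u)\ge l)\le e^{-\dmin l}+O(1/n)$. Splitting the convolution at $t=l$ then yields
\begin{equation*}
\PP(T_u(\beta_n)\ge M+l)\;\le\;\PP(T^{*}\ge M)+e^{-\dmin l}+O(1/n),
\end{equation*}
with $M:=(1+\epsilon)\log n/(2(\nu-1))$, which reduces the task to showing $\PP(T^{*}\ge M)=o(n^{-1})$.

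For $T^{*}$, I would first use Lemma~\ref{lem-coupl} to dominate $\sum_{k\le i}\hd_k$ from below by the i.i.d.\ sum $\sum_{k\le i}\uD_k^{(n)}$ of mean $\nu-o(1)$. A Bernstein-type inequality---legitimate thanks to $\uD_k^{(n)}\le\Delta_n=O(n^{1/2-\tau})$ and the variance bound implied by $\sum d_i^2=O(n)$---then gives $\sum_{k=1}^i\uD_k^{(n)}\ge(\nu-\delta)i$ uniformly for $i_0\le i\le\beta_n$ with probability $1-o(n^{-1})$, for a threshold $i_0=O(\Delta_n\log n)=o(\beta_n)$. A parallel uniform control $X_i(u)=o(i)$ on the tree excess---the number of internal half-edge matchings during the exploration of $\beta_n$ vertices being $O(\beta_n^2\Delta_n/m)=o(\log n)$---yields $S_i(u)\ge(\nu-1-2\delta)i$ on $i\in[i_0,\beta_n]$ on an event $\cE_n$ of probability $1-o(n^{-1})$. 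On $\cE_n$, Chernoff's inequality for the sum of independent exponentials gives, for any admissible $\theta>0$,
\begin{equation*}
\PP(T^{*}\ge M,\,\cE_n)\;\le\;e^{-\theta M}\prod_{i=1}^{\beta_n-2}\frac{1}{1-\theta/S_i(u)},
\end{equation*}
and the estimate $-\log(1-x)\le x/(1-x)$ bounds the logarithm of the product by $\tfrac{\theta}{2(\nu-1-2\delta)}\log n+o(\log n)$, leading to a Chernoff exponent of the form $-\theta\epsilon/(2(\nu-1))+O(\delta)$.

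The main obstacle is to drive this Chernoff exponent strictly below $-1$ for every $\epsilon>0$, because the natural constraint $\theta<\dmin$ (needed for the MGF to be finite at the floor $S_i=\dmin$) would cap the absolute value of the exponent at $\dmin\epsilon/(2(\nu-1))$. I would resolve this by a two-scale refinement: split $T^{*}$ at the warm-up scale $i_0$; on the initial block $i<i_0$, the deterministic growth $\hS_i\ge\dmin+i$ (from $\dmin\ge 3$, so $\hd_k\ge 2$) combined with tree-excess control bounds that block by $\sum E_i/(\dmin+i/2)$, whose Chernoff tail at the scale of $M/2$ is $o(n^{-1})$; on the main block $i\ge i_0$, $\min_i S_i(u)\ge(\nu-1-2\delta)i_0\to\infty$ renders the $\theta<\dmin$ constraint vacuous, so $\theta$ can be chosen as large as needed, in particular above $2(\nu-1)/\epsilon$, pushing the exponent below $-1$. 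A secondary subtlety is the Bernstein concentration of $\sum_k\uD_k^{(n)}$, whose variance is only polynomially controlled by $\Delta_n$; the hypothesis $\tau>0$ is precisely what makes the concentration effective on scales below $\beta_n=O(\sqrt{n\log n})$.
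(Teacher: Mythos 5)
There is a genuine gap, and it sits exactly where you flag ``the main obstacle'': the warm-up block. Because you concentrate $\sum_{k\le i}\uD^{(n)}_k$ with a Bernstein inequality, whose effective variance/range is only controlled by $\Delta_n$, your linear lower bound $S_i(u)\ge(\nu-1-2\delta)i$ only starts at $i_0=\Theta(\Delta_n\log n)$, which may be as large as $n^{1/2-\tau}\log n$. On $i<i_0$ you then fall back on worst-case rates and claim that $\sum_{i<i_0}E_i/(\dmin+i/2)$ has a Chernoff tail $o(n^{-1})$ at scale $M/2$. This is false: the mean of that sum is already about $2\log i_0\asymp(1-2\tau)\log n$ (and about $\tfrac{1/2-\tau}{\dmin-2}\log n$ even with the correct slope $\dmin-2$), which for instance for random $3$-regular graphs ($\nu-1=1$, so $M/2=\tfrac{1+\epsilon}{4}\log n$) exceeds $M/2$ when $\tau$ is small; no exponential Markov bound can make that tail small, and in any case a time budget of order $\log n$ with a non-negligible prefactor is consumed before your main block even starts. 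The missing idea is that for the \emph{lower} tail of a sum of i.i.d.\ nonnegative variables no variance input is needed: $\EE[e^{-\theta\uD^{(n)}_1}]$ is bounded and converges, so $\PP(\uD^{(n)}_1+\dots+\uD^{(n)}_k\le\eta k)\le e^{-\gamma k}$ with a constant $\gamma$ (Lemma~\ref{lem-upp} in the paper). This lets the crossover be taken at the polylogarithmic scale $\alpha_n=\log^3 n$ (Lemma~\ref{lem-R}), so the worst-case-rate phase has expected duration $O(\log\log n)=o(\log n)$, its tail is paid for with the MGF at $\theta=\dmin$ (Lemma~\ref{lem-up-r}), and beyond $\alpha_n$ the rates are at least $\tfrac{\nu-1}{1+\epsilon}k\to\infty$ so that $\theta$ can be sent to infinity exactly as you do in your main block.

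Two further slips. First, peeling off only the first jump cannot yield the stated estimate: $\PP(E_0/S_0(u)\ge l)$ is of order $p_{\dmin}e^{-\dmin l}$, i.e.\ $\Theta(e^{-\dmin l})$ rather than $o(e^{-\dmin l})$, and your additional $O(1/n)$ is not $o(n^{-1})$; as written the bound fails already at $l=0$, where the proposition must give $o(1)$. You need to spend part of the $\epsilon\log n$ slack on the early phase, i.e.\ bound $\PP(T_u(\alpha_n)\ge\epsilon\log n+l)$, which is what produces the extra $n^{-\epsilon\dmin}$ factor in the paper and hence the small-$o$. Second, your tree-excess justification is miscalculated: $\beta_n^2\Delta_n/m\asymp\Delta_n\log n$, not $o(\log n)$, and in fact the total excess up to $\beta_n$ steps need not be polylogarithmic at all; what is needed, and what the paper proves via Lemma~\ref{lem:upX} (the domination $2X_k(u)\le_{st}\Bin(\hS_k(u),\sqrt{\hS_k(u)/n})$) together with a Chernoff bound, is the \emph{relative} control $X_k(u)=o(k)$ for $k\ge\alpha_n$, plus $X_k(u)\le 1$ (or bounded) during the early phase. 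With these three repairs your argument essentially reduces to the paper's proof.
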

\noindent A proof of this proposition is given in Section~\ref{sec:app-up-bis}. We give here a heuristic based on the branching process approximation defined in the last section.
Note that for $T_i$ defined by (\ref{eq:Ti}), we have
\begin{eqnarray*}
\EE\left[ e^{\theta T_n}| \hSigma\right] = \prod_{i=0}^{n-1}\left(
  1+\frac{\theta}{\hSigma_i-\theta}\right)\leq \exp\left( \sum_{i=0}^{n-1}
\frac{\theta}{\hSigma_i-\theta}\right).
\end{eqnarray*}
Then for small values of $i\leq \alpha_n$, where $\alpha_n=\log^3 n$ grows slower than any
power of $n$, we use the lower bound:
\begin{eqnarray*}
\hSigma_i\geq i(\dmin-2)+\dmin,
\end{eqnarray*}
so that we get with $\theta=\dmin$:
\begin{eqnarray*}
\PP(T_{\alpha_n}\geq x\log n) &\leq& \EE\left[ e^{\dmin T_{\alpha_n}}\right]\exp(-x\dmin
\log n) \\ &\approx& \alpha_n^{\frac{\dmin}{\dmin-2}}n^{-x\dmin}.
\end{eqnarray*}
Now for larger values of $i$, we can use
the approximation $\hSigma_i\geq i(\nu-1-\epsilon)$ so that we get
\begin{eqnarray*}
\PP(T_{\beta_n} -T_{\alpha_n} \geq y\log n) \leq \EE\left[ e^{\theta
    (T_{\beta_n}-T_{\alpha_n})}\right] \exp (-\theta y\log n)\\
\approx \left(
  \frac{\beta_n}{\alpha_n}\right)^{\frac{\theta}{\nu-1-\epsilon}}
n^{-y\theta}\approx n^{\theta\left( \frac{1}{2(\nu-1-\epsilon)}-y\right)},
\end{eqnarray*}
so if we choose $y\approx \frac{1}{2(\nu-1)} $, we get  \begin{eqnarray*}
\PP\left(T_{\beta_n} - T_{\alpha_n} \geq
\left(\frac{1}{2(\nu-1)}\right)\log n\right)\approx n^{-1}.
\end{eqnarray*}
We refer to Section~\ref{sec:app-up-bis} for the technical details.

\noindent By Proposition~\ref{prop-up-bis}, we have for
a uniformly chosen vertex $u$ and any $\epsilon>0$:
\begin{eqnarray*}
\PP\left(T_u(\beta_n) \geq
  \left(\frac{1+\epsilon}{2(\nu-1)}\right)\log
  n\right) = o(1), \mbox{ and } \\
\PP\left(T_u(\beta_n) \geq
  \left(\frac{1+\epsilon}{2(\nu-1)}+\frac{1+\epsilon}{\dmin}\right)\log
  n\right) = o(n^{-1}).
\end{eqnarray*}
By taking a union bound over $u$, it follows that
\begin{eqnarray*}
\PP\left(T_u(\beta_n) \leq
  \left(\frac{1+\epsilon}{2(\nu-1)}+\frac{1+\epsilon}{\dmin}\right)\log
  n,\mbox{
      for all $u$}\right) \\ = 1-o(1).
\end{eqnarray*}
Combining this with Proposition \ref{prop-up} finishes the proof of the upper
bound.

\subsection{Proof of the lower bound.}\label{sse:low}
To prove the lower bound, it suffices to prove that for any $\epsilon > 0$,  there exists w.h.p. two vertices $u$ and $v$ such that
$$\dist_w(u,v) \geq \left(\frac{1}{\nu-1} + \frac{2}{\dmin}\right)(1-\epsilon) \log n .$$
Let $G_n \sim G(n, (d_i)_1^n)$, and
$$B'_w(u,t) := \left\{ v: \dist_w(N(u),v) \leq t \right\},$$ where $N(u)$ denotes the neighbors of $u$ in $G_{n} $ . Let $V_{\dmin}$ be the set of vertices with degree $\dmin$. The following proposition, proved in Section~\ref{sec:app-low}, gives a lower bound for the distance between the neighbors of a uniformly chosen vertex $u \in V_{\dmin}$ from the neighbors of another uniformly chosen vertex $v \in V_{\dmin}$.
\begin{proposition}\label{prop-low}
Let $u$, $v$ be two uniformly chosen vertices of the graph $G_n$, with degree $\dmin$, i.e. $u,v \in V_{\dmin}$, and let $t_n = \frac{(1-\epsilon)\log n}{2(\nu-1)} $. We have  w.h.p.
\begin{eqnarray*}
B'_w(u,t_n)\cap B'_w(v,t_n) = \emptyset .
\end{eqnarray*}
\end{proposition}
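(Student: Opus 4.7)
The plan is to argue that, since $t_n$ sits strictly below half of the ``typical-distance'' scale $\frac{\log n}{\nu-1}$ appearing in Theorem~\ref{Th-FPP}, each of the balls $B'_w(u,t_n)$ and $B'_w(v,t_n)$ has size well below $\sqrt{n}$ with high probability, whereupon a birthday-paradox style estimate in the configuration-model matching shows they cannot meet.

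\textbf{Step 1 (ball-size control).} The exploration from the neighbourhood $N(u)$ (resp.\ $N(v)$) is a straightforward adaptation of the process of Section~\ref{sec:explor} initialised with the $\dmin$ vertices of $N(u)$ and a bounded initial list of half-edges. By Lemma~\ref{lem-coupl} the forward degrees $\hd_i$ are stochastically dominated by an i.i.d.\ sequence of law $\bpi^{(n)}$, which converges to the size-biased distribution $q$ of mean $\nu$. The dominating continuous-time Markov branching process (cf.\ Section~\ref{sse:proof-bp}) has Malthusian parameter $\nu-1$, so both its current size and the size of the explored set have expectation $O(e^{(\nu-1)t})$. Setting $K_n := n^{(1-\epsilon/2)/2}$, Markov's inequality applied at $t_n=\frac{(1-\epsilon)\log n}{2(\nu-1)}$ yields
\begin{eqnarray*}
\PP\bigl(|B'_w(u,t_n)| > K_n\bigr) = O(n^{-\epsilon/4}),
\end{eqnarray*}
and the same for the current list $F_u$ of unmatched half-edges incident to the ball, and symmetrically for $v$. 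The key point is that $K_n^2/n = n^{-\epsilon/2} = o(1)$, so the balls are safely below the $\sqrt{n}$ threshold.

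\textbf{Step 2 (intersection probability).} Reveal the exploration from $u$ first, exposing $B'_w(u,t_n)$ together with its frontier $F_u$ of currently unmatched half-edges. By the standard deferred-decisions principle for the configuration model, the matching of all half-edges not yet paired (those in $F_u$ together with the half-edges of $V\setminus B'_w(u,t_n)$) is uniform conditional on this data. Now grow the ball from $v$ on the residual instance. For the two balls to meet, at one of the at most $K_n$ main matching steps of the $v$-exploration the sampled half-edge must land in $F_u$, which is the only remaining set of half-edges incident to $B'_w(u,t_n)$. Each such pairing is uniform over $m-O(K_n\Delta_n)=\Theta(n)$ free half-edges and $|F_u|\leq K_n$, so a union bound gives
\begin{eqnarray*}
\PP\bigl(B'_w(u,t_n)\cap B'_w(v,t_n)\neq\emptyset\bigr) = O\bigl(K_n^2/n\bigr) = O(n^{-\epsilon/2}) \to 0,
\end{eqnarray*}
as required.

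\textbf{Main obstacle.} The delicate calibration is in Step~1: the expected ball size at time $t_n$ is exactly $n^{(1-\epsilon)/2}$, only a polynomial factor below $\sqrt{n}$, so $K_n$ must be chosen in the narrow window $\bigl(n^{(1-\epsilon)/2},\sqrt{n}\bigr)$ where the Markov tail vanishes and the birthday bound $K_n^2/n$ still decays; the choice $K_n=n^{(1-\epsilon/2)/2}$ achieves both. Two technicalities require separate verification. First, Lemma~\ref{lem-coupl} is phrased for a uniformly chosen starting vertex, whereas here $u,v$ are uniform on $V_{\dmin}$; since Condition~\ref{cond-dil}(v) gives $|V_{\dmin}|=p_{\dmin}n+o(n)$, conditioning on $d_u=d_v=\dmin$ only fixes the initial term and preserves the stochastic domination of the $\hd_i$. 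Second, one needs $u\neq v$ and $N(u)\cap N(v)=\emptyset$ with probability $1-o(1)$, which follows from $|V_{\dmin}|\to\infty$ and the sparsity of short cycles under Condition~\ref{cond-dil}.
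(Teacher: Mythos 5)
Your proposal is correct in outline but takes a genuinely different route at the crucial ball-size step. The paper does not use a first-moment bound at time $t_n$: it bounds from below the hitting time of size $z_n=\sqrt{n/\log n}$, reducing via Lemma~\ref{lem-coupl} and the Jensen-type Lemma~\ref{lem-coupl-exp} to a sum of independent exponentials with deterministic rates $\dmin\bnu^{(n)}+(\bnu^{(n)}-1)i$, whose lower tail is computed explicitly as in \cite{ding09}, giving $\PP(\tau_{z_n}\le t_n)=o(n^{-4})$; since $z_n$ is only a $\sqrt{\log n}$ factor below $\sqrt n$, the paper must then refine the birthday step by showing the frontier is at most $z_n\log\log n$ w.h.p., ending with the bound $\log\log n/\log n=o(1)$. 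You instead exploit the $\epsilon$-slack in $t_n$: the dominating branching process has expected size $n^{(1-\epsilon)/2+o(1)}$ at time $t_n$, so Markov at $K_n=n^{(1-\epsilon/2)/2}$ puts both balls polynomially below $\sqrt n$ and the crude bound $K_n^2/n=n^{-\epsilon/2}$ closes the proof. This is simpler and quantitatively sufficient, because the proposition is applied only to a single random pair, so the paper's $o(n^{-4})$ tail is stronger than needed. The point you leave implicit is precisely what the paper's machinery supplies: the claim $\EE|B'_w(u,t_n)|=O(e^{(\nu-1)t_n})$ requires coupling the forward degrees below i.i.d.\ $\bD^{(n)}$ (Lemma~\ref{lem-coupl}, valid only for the first $\beta_n\gg K_n$ steps, so the bound should be phrased through the hitting time of size $K_n$ rather than the ball at time $t_n$ directly), the observation that the tree case maximizes the list size so that inter-arrival times stochastically dominate $\mathrm{Exp}(\hS'_i(u))$, and a hitting-time comparison $T(k)\ge_{st}\bar T(k)$ to transfer to the i.i.d.\ branching process whose number born by time $t$ has mean $O(e^{(\bnu^{(n)}-1)t})$; also note the Malthusian parameter is $\bnu^{(n)}-1\to\nu-1$, so the $n^{o(1)}$ correction must be absorbed in the $\epsilon$-gap, which your choice of $K_n$ does. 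These are fixable details rather than gaps, and your remarks on conditioning $u,v\in V_{\dmin}$ and on exposing $N(v)$ are consistent with how the paper itself treats those points.
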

Now let $s_n:= \frac{1-\epsilon}{\dmin} \log n $, and call a vertex in $V_{\dmin}$ {\it bad} if the weights on all the $\dmin$ edges connected to it are larger than $s_n$. Let $A_u$ denote the event that $u$ is bad, and let $Y:=\sum_u \ind_{A_u}$ be the number of bad vertices. Then we have for $u \in V_{\dmin}$:
$$\PP(A_u) = \PP(\text{Exp}(d_{min}) \geq s_n) = n^{-1+\epsilon}. $$
Then it is easy to see that
\begin{eqnarray*}
\EE(Y) &=& \sum_u \PP (A_u) = p_{\dmin} (1+o(1)) n^{\epsilon}, \ \mbox{and} \ \\
\text{Var}(Y) &=& \sum_{u,v \in V_{\dmin}} \text{Cov}\left(\ind_{A_u},\ind_{A_v}\right) \\
&=& \sum_{u\in V_{\dmin}} \text{Var}\left(\ind_{A_u}\right) + \sum_{uv\in E} \text{Cov}\left(\ind_{A_u},\ind_{A_v}\right)\\
&\leq& (\dmin+1) \EE(Y) .
\end{eqnarray*}
Then by Chebyshev's inequality, we have $$Y \geq \frac{2}{3} p_{\dmin} n^{\epsilon}$$ with high probability.

Let $Y'$ denote the number of bad vertices that are of distance at most $s_n+\frac{(1-\epsilon)}{\nu -1} \log n$ from vertex $a$ (chosen uniformly). By Proposition \ref{prop-low} for a uniformly chosen vertex $i$ we have w.h.p. $B'_w(a,t_n) \cap B'_w(i,t_n) = \emptyset$. In particular, conditioning on the event $A_i$, the probability that $B'_w(i,t_n)$ does not intersect $B'_w(a,t_n)$ remains the same. Hence, for a uniformly chosen vertex $i$ we have
$$\PP\left(A_i, B'_w(a,t_n) \cap B'_w(i,t_n) \neq \emptyset\right) = o(\PP(A_i)), $$ and then we deduce
$$\EE(Y') = o(\EE(Y)) = o(\log n) .$$
By Markov's inequality, $Y' \leq \frac{1}{3} p_{\dmin} n^{\epsilon}$ w.h.p., and hence $Y-Y'$ is w.h.p. positive. This implies the existence of a vertex $i$ whose distance from $a$ is at least $\left( \frac{1}{\nu-1} + \frac{1}{\dmin}\right)(1-\epsilon) \log n $. Then for any $\epsilon > 0$ we have w.h.p.
\begin{eqnarray*}
\flood_w(G_n) &:=& \max_{1 \leq i \leq n} \dist_w(a,i) \\
&\geq& \left( \frac{1}{\nu-1} + \frac{1}{\dmin}\right)(1-\epsilon) \log n .
\end{eqnarray*}

Let $R$ denote the number of pairs of distinct bad vertices. Then $Y \geq \frac{2}{3} p_{\dmin} n^{\epsilon}$ gives
$$R \geq \frac{1}{4} p_{\dmin}^2 n^{2\epsilon}$$ w.h.p.
By Proposition \ref{prop-low} for two uniformly chosen vertices $u, v$ we have w.h.p. $$B'_w(u,t_n) \cap B'_w(v,t_n) = \emptyset.$$ In particular, conditioning on the events $A_u$, and $A_v$, the probability that $B'_w(u,t_n)$ does not intersect $B'_w(v,t_n)$ remains the same. Hence, for two uniformly chosen vertices $u, v$ we have,
$$\PP\left(A_u, A_v, B'_w(u,t_n) \cap B'_w(v,t_n) \neq \emptyset\right) = o(\PP(A_u,A_v)) .$$

Let $R'$ denote the number of pairs of bad vertices that are of distance at most $2s_n+\frac{(1-\epsilon)}{\nu-1} \log n$. Then we have
$$\EE R' = o(\EE Y^2) = o (n^{2\epsilon}).$$ By Markov's inequality, $R' \leq \frac{1}{6} p_{\dmin}^2 n^{2\epsilon} $ w.h.p, and hence $R-R'$ is w.h.p positive. This implies that for any $\epsilon > 0$ we have w.h.p.
\begin{eqnarray*}
\diam_w(G_n) &:=& \max_{1 \leq i,j \leq n} \dist_w(i,j) \\ &\geq& \left( \frac{1}{\nu-1} + \frac{2}{\dmin} \right)(1-\epsilon) \log n ,
\end{eqnarray*}
which completes the proof.

\subsection{Proof of Corollary~\ref{th:nttiming}.}\label{sec:app-cor}

First we prove that for a $r$-regular graph $G=(V,E)$ the dynamic evolution of
informed nodes in continuous-time broadcast when each node is endowed with
a Poisson process with rate 1 corresponds exactly to the flooding time
with exponential random weights on edges with mean $r$. Let $\mathcal{I}(a,t)$
denote the set of informed nodes at time $t$ when we start the broadcast
process from node $a \in V$. Indeed we show that random map $\mathcal{I}(a,.)$
from $[0,\infty)$ to subsets of $V$ has the same law as $B_w(a,.)$ when
the weights are exponential with mean $r$ using a coupling argument:
from the asynchronous broadcasting model, we construct weights on the
edges of the graph and show that these weights are independent
exponential with mean $r$.


 Let $T(v)$ denote the time at which node $v$ becomes informed in the
asynchronous broadcast model and let $\tau_i(u,v)$, for $(u,v)\in E$,
denote the $i$-th time that node $u$ contacts node $v$.
Now we define the weight of the edge $e=(u,v)$ as follows:
\begin{itemize}
\item if $T(u)\leq T(v)$ then
$$w_e := \min_i \{ \ \tau_i(u,v) - T(u) \ | \ \tau_i(u,v) > T(u) \ \} .$$
\item if $T(u)> T(v)$ then
$$w_e := \min_i \{ \ \tau_i(v,u) - T(v) \ | \ \tau_i(v,u) > T(v) \ \} .$$
\end{itemize}

Thanks to the memoryless property of the Poisson process, $\{w_e, e \in E\}$
are independent exponential random variables with mean $r$ and are
such that we have $\mathcal{I}(a,t)=B_w(a,t)$ for all $t\geq 0$.

Hence the asynchronous broadcast time corresponds to the flooding time
with exponential weights with mean $r$ and it
is easy to conclude the proof by Theorem~\ref{thm-main}, that is w.h.p., 
\begin{eqnarray*}
\text{ABT}(G) &=& r \left( \frac{1}{r-2} + \frac{1}{r} \right) \log n + o(\log n) \\
&=& 2\left(\frac{r-1}{r-2}\right) \log n + o(\log n).
\end{eqnarray*}

\bibliographystyle{abbrv}
\bibliography{Biblio}

\iftrue

\appendix
\section{Proofs}\label{se:appendix}

Recall that $\beta_n=3\sqrt{\frac{\lambda}{\nu-1} n \log n}$, and $\alpha_n=\log^3 n$. Now we consider the exploration process defined in Section~\ref{sec:explor}.

\subsection{Proof of Proposition~\ref{prop-up-bis}.}\label{sec:app-up-bis}

Before getting to the main part of the proof, we need to prove some
technical lemmas.

We start by some simple remarks.
The process $i\mapsto X_i(a)$ is non-decreasing in
$i\in [1,n]$. Moreover, given $\cF_i$, the increment $X_{i+1}(a)-X_i(a)$ is
stochastically dominated by a binomial variable:
\begin{eqnarray}
\label{upinc1}\Bin\left( \hd_{i+1}, \frac{(S_i(a)-1)^+}{m^{(n)}-2(X_i(a)+i)}\right).
\end{eqnarray}
Note that if $i>i^*$, then $S_i(a)=0$ and $X_{i+1}(a)-X_i(a)=0$ so
that (\ref{upinc1}) is still valid.
For $i< n/2$, we have
\begin{eqnarray*}
\frac{(S_i(a)-1)^+}{m^{(n)}-2(X_i(a)+i)}&\leq& \frac{\hS_i(a)-2X_i(a)}{m^{(n)}-2(X_i(a)+i)}\\
&\leq&\frac{\hS_i(a)}{m^{(n)}-2i}\\
&\leq&\frac{\max_{\ell\leq i}\hS_\ell(a)}{n-2i}.
\end{eqnarray*}
Hence, we obtain for $i< n/2$:
\begin{eqnarray}
\label{upinc11} && X_i(a)\leq_{st}\Bin\left(\max_{\ell\leq i}\hS_\ell(a)+i, \frac{\max_{\ell\leq i}\hS_\ell(a)}{n-2i}\right).
\end{eqnarray}

\begin{lemma}\label{lem:upX}
For any $k<n/2$, we have
\begin{eqnarray*}
\PP\left(2X_k(a)\geq x\mid \hS_k(a), i^*\geq k\right) \leq   \\ \PP\left(
  \Bin\left(\hS_k(a),\sqrt{\hS_k(a)/n}\right)\geq x \mid \hS_k(a)\right).
\end{eqnarray*}
\end{lemma}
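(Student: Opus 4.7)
The plan is to combine the stochastic bound~(\ref{upinc11}) with the matching structure of the configuration model to bound $2X_k(a)$ by the target binomial. First, Condition~\ref{cond-dil}(v) gives $\dmin\ge 3$, so every forward degree $\hd_j\ge 2$, which makes $\hS_i(a)$ non-decreasing in $i$; hence $\max_{\ell\le k}\hS_\ell(a)=\hS_k(a)$, and (\ref{upinc11}) yields
\begin{equation*}
X_k(a)\leq_{st}\Bin\bigl(\hS_k(a)+k,\,\hS_k(a)/(n-2k)\bigr)
\end{equation*}
conditional on $(\hS_k(a),\{i^*\ge k\})$. Moreover $\hS_k(a)\ge \dmin+k(\dmin-2)\ge k+3$, so $k\le \hS_k(a)$ and the first parameter is at most $2\hS_k(a)$.

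Next, I would handle the easy ranges separately. Since $S_k(a)=\hS_k(a)-2X_k(a)\ge 0$ by~(\ref{eq:S(a)}), the deterministic bound $2X_k(a)\le\hS_k(a)$ gives the claim automatically for $x>\hS_k(a)$ (both sides vanish), and both sides equal $1$ when $x\le 0$. For $1\le x\le \hS_k(a)$, I would use the matching interpretation: conditional on the subgraph $B(a,\tau_k)$ and on its tree matching, the $\hS_k(a)$ ``marked'' half-edges are paired among the $m-2k$ remaining ones via a uniformly random matching, and $2X_k(a)$ counts the marked half-edges whose partner is also marked. A union bound over collections of $y=\lceil x/2\rceil$ disjoint internal pairs then gives
\begin{equation*}
\PP\bigl(2X_k(a)\geq x\mid \hS_k(a),\,i^*\geq k\bigr)\leq \binom{\hS_k(a)}{2y}(2y-1)!!\prod_{j=0}^{y-1}\frac{1}{m-2k-2j-1},
\end{equation*}
and after using $m\ge n$ and $k<n/4$ this is at most $\binom{\hS_k(a)}{2y}(\hS_k(a)/n)^y$, which matches the $2y$-th term of the tail of $\Bin(\hS_k(a),\sqrt{\hS_k(a)/n})$.

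The main obstacle will be matching the constants sharply, especially near the extreme $x=\hS_k(a)$ where the exploration must close up entirely. There, $\PP(\Bin(\hS_k(a),\sqrt{\hS_k(a)/n})=\hS_k(a))=(\hS_k(a)/n)^{\hS_k(a)/2}$ equals, up to $o(1)$ factors, the probability that all $\hS_k(a)$ marked half-edges pair internally in the uniform matching, so the bound is tight at this extreme and the choice of probability $\sqrt{\hS_k(a)/n}$ in the target binomial is made precisely for this reason. The remaining work is summing the union bound geometrically in $y$ and absorbing lower-order combinatorial factors so that one obtains the claimed stochastic dominance for every $x$, not just in expectation; this is the step I expect to be the most delicate.
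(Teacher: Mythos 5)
Your reduction is the right one, and it is the same as the paper's: condition on the exploration tree and on $\hS_k(a)$, observe that the remaining $m^{(n)}-2k$ half-edges are paired by a uniform matching, and identify $2X_k(a)$ with the number of the $\hS_k(a)$ marked half-edges whose partner is also marked. Where your proposal has a genuine gap is in the step from this identification to the claimed tail bound. The paper closes this step by invoking Lemma~\ref{lem-excess} (Fernholz--Ramachandran), which asserts precisely the stochastic dominance $|B\cap F(B)|\leq_{st}\Bin\bigl(|B|,\sqrt{|B|/m}\bigr)$; you instead propose a union bound over collections of $y=\lceil x/2\rceil$ internal pairs and then assert that the resulting expression ``matches the $2y$-th term of the tail.'' That assertion conflates the quantity $\binom{\hS_k(a)}{2y}(\hS_k(a)/n)^{y}$ with the binomial tail: the tail's $2y$-th term carries the factor $(1-p)^{\hS_k(a)-2y}$ with $p=\sqrt{\hS_k(a)/n}$, and without it the union-bound expression is not in general below $\PP\bigl(\Bin(\hS_k(a),p)\geq x\bigr)$. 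Concretely, when $\hS_k(a)$ is of order $\beta_n\asymp\sqrt{n\log n}$ and $x=2$, your union bound is of order $\hS_k(a)^2/n\asymp\log n>1$, while the binomial tail is at most $1$; so the union-bound expression cannot dominate the tail uniformly in $x$, and the route of ``summing the union bound geometrically in $y$'' (which you yourself flag as the delicate remaining step) cannot produce the stochastic dominance stated in the lemma. What is needed is a genuinely different argument for the dominance itself, e.g.\ the sequential coupling behind Lemma~\ref{lem-excess}: expose the marked half-edges one at a time, note that each still-unmatched marked half-edge pairs internally with conditional probability at most $\hS_k(a)/(m^{(n)}-2k)\leq p^2$, and couple each such event (which increments the count by $2$) with two independent Bernoulli$(p)$ successes.

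Two smaller points: the appeal to (\ref{upinc11}) at the start is not used in (and does not lead to) the final bound, since it controls $X_k(a)$ with success probability $\hS_k(a)/(n-2k)$ rather than $\sqrt{\hS_k(a)/n}$; and your restriction $k<n/4$ silently strengthens the hypothesis $k<n/2$ of the lemma --- it is unnecessary, because $\dmin\geq 3$ gives $m^{(n)}\geq 3n$, hence $m^{(n)}-2k\geq n$ for all $k<n/2$, which is the estimate the paper uses.
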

\begin{proof}
Note that given a set of $k$ edges connecting the $k+1$ vertices in $B_w(a, T_a(k+1))$ and given
$\hS_k(a)$, the remaining edges of the graph are obtained by an
uniform matching of the remaining half-edges; the total remaining
number of half-edges is $m^{(n)}-2k$ (which is greater than $n$ by $\dmin \geq 3$) and the number of unmatched half-edges
in $B_w(a, T_a(k+1))$ is exatly $\hS_k(a)$.

The proof follows from the following lemma proved in \cite[Lemma 3.2]{fern07}:
\begin{lemma}\label{lem-excess}
Let $A$ be a set of $m$ points, i.e. $|A|=m$, and let $F$ be a uniform random matching of elements of $A$. For $a \in A$, we denote by $F(a)$ the point matched to $a$, and similarly for $B \subset A$, we write $F(B)$ for the set of points matched to $B$. Now assume $B \subset A$, and let $k=|B|$. We have
\begin{eqnarray*}
|B \cap F(B)| \leq_{st} \Bin (k,\sqrt{k/m}).
\end{eqnarray*}
\end{lemma}
\end{proof}

\noindent We define the events :
\begin{eqnarray*}
R(a) := \left\{ S_k(a) \geq \dmin + (\dmin-2)k ,\mbox{ for all } 1
  \leq k \leq \alpha_n \right\} , \\
R'(a) := \left\{ S_k(a) \geq 1 + (\dmin-2)k ,\mbox{ for all } 1 \leq k \leq \alpha_n \right\} .
\end{eqnarray*}

\begin{lemma}\label{lem-R1}
For a uniformly chosen vertex $a$, we have
\begin{eqnarray}
\PP(R(a)) &\geq& 1-o((\log^{10} n / n)) , \label{pr-R1}\\
\PP(R'(a)) &\geq& 1 - o(n^{-3/2}) . \label{pr-R2}
\end{eqnarray}
\end{lemma}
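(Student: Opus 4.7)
The plan is to reduce both bounds to tail estimates on the tree excess $X_{\alpha_n}(a)$, which we control via the coupling of Lemma~\ref{lem-coupl} and the cycle-count domination of Lemma~\ref{lem:upX}.

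\emph{Step 1 (reduction).} Since $\hd_j \geq \dmin - 1$ and $d_a \geq \dmin$, the definition (\ref{eq:defhS}) yields the deterministic lower bound
\begin{eqnarray*}
\hS_k(a) \geq \dmin + (\dmin - 2) k \qquad \text{for every } k.
\end{eqnarray*}
Combined with (\ref{eq:S(a)}) and the monotonicity of $X_k$, a short algebraic check shows
\begin{eqnarray*}
R(a)^c \subseteq \{X_{\alpha_n}(a) \geq 1\}, \quad R'(a)^c \subseteq \{X_{\alpha_n}(a) \geq 2\},
\end{eqnarray*}
where the second inclusion uses $\dmin \geq 3$. Thus it suffices to control $P(X_{\alpha_n}(a) \geq 1)$ and $P(X_{\alpha_n}(a) \geq 2)$.

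\emph{Step 2 (bound (\ref{pr-R1}) by Markov).} I would use (\ref{upinc1}) together with Lemma~\ref{lem-coupl}: since $E[\hd_{i+1} \mid \cF_i]$ is bounded uniformly (by the mean of $\bpi^{(n)}$) and Condition~\ref{cond-dil}(iv) guarantees $m - 2(X_i + i) = (1+o(1))m$ for $i \leq \alpha_n$, one obtains $E[X_{i+1} - X_i \mid \cF_i] \leq C\, S_i(a)/n$. Iterating and using $E[\hS_i] = O(i)$ gives
\begin{eqnarray*}
E[X_{\alpha_n}(a)] = O(\alpha_n^2/n) = O(\log^6 n / n),
\end{eqnarray*}
so Markov's inequality yields $P(X_{\alpha_n}(a) \geq 1) = o(\log^{10} n / n)$, proving (\ref{pr-R1}).

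\emph{Step 3 (bound (\ref{pr-R2}) by Lemma~\ref{lem:upX}).} Lemma~\ref{lem:upX} gives, on $\{i^* \geq \alpha_n\}$,
\begin{eqnarray*}
P(X_{\alpha_n}(a) \geq 2 \mid \hS_{\alpha_n}) \leq \binom{\hS_{\alpha_n}}{4}\!\left(\frac{\hS_{\alpha_n}}{n}\right)^{\!2} \leq \frac{\hS_{\alpha_n}^{\,6}}{n^2}.
\end{eqnarray*}
I would split on $\{\hS_{\alpha_n}(a) \leq K\alpha_n\}$ for a slowly growing $K = \log n$. On this typical event the bound reads $O(\log^{24} n / n^2) = o(n^{-3/2})$. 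On the complement, Lemma~\ref{lem-coupl} provides the stochastic domination $\hS_{\alpha_n}(a) \leq_{st} d_a + \sum_{j=1}^{\alpha_n} \bD^{(n)}_j$, a sum of i.i.d.\ variables with bounded mean and bounded deterministically by $\Delta_n = O(n^{1/2-\tau})$; a truncated Bernstein-type inequality then delivers the required $o(n^{-3/2})$ tail.

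\emph{Main obstacle.} The delicate piece is the $o(n^{-3/2})$ tail bound for $\hS_{\alpha_n}(a)$ needed in Step~3. Condition~\ref{cond-dil}(iii) supplies only a finite second moment of the degree sequence, i.e.\ only a finite mean for $\bpi^{(n)}$, so higher-moment concentration of the forward-degree sum is not directly available; the maximum-degree bound from Condition~\ref{cond-dil}(iv) combined with a careful truncation is what rescues the argument.
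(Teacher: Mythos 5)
Your Step~1 reduction and your Step~2 bound for (\ref{pr-R1}) are fine: the first-moment computation $\EE[X_{\alpha_n}(a)]=O(\alpha_n^2/n)$ plus Markov gives the required $o(\log^{10}n/n)$, and is in the same spirit as the paper's binomial-domination argument. The genuine gap is in Step~3, precisely at the point you flag as the ``main obstacle'': the tail bound $\PP(\hS_{\alpha_n}(a)>\log^4 n)=o(n^{-3/2})$ that your split requires is not available under Condition~\ref{cond-dil}, and no truncated Bernstein-type inequality can produce it. Condition~\ref{cond-dil}(iii) only guarantees a bounded \emph{first} moment for the size-biased forward-degree law, so a single draw exceeds a polylogarithmic level with probability as large as $\Theta(1/\mathrm{polylog}\,n)$: for instance a degree sequence with $\Theta(n/\log^8 n)$ vertices of degree $\lceil\log^4 n\rceil$ and all remaining degrees equal to $\dmin$ satisfies Condition~\ref{cond-dil}, yet the exploration meets such a vertex within $\alpha_n$ steps with probability of order $1/\log n$, so $\PP(\hS_{\alpha_n}(a)>\log^4 n)\gg n^{-3/2}$; the obstruction is a single large jump, which truncation cannot remove. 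Nor can you repair this by moving the threshold: the conditional bound $\hS_{\alpha_n}^6/n^2$ from Lemma~\ref{lem:upX} is useful only when $\hS_{\alpha_n}=o(n^{1/12})$, while degrees of order $n^{1/2-\tau}$ are allowed by (iii)--(iv) (take about $n^{2\tau}$ of them) and are met at the very first step with probability of order $n^{\tau-1/2}\gg n^{-3/2}$. Hence for small $\tau$ there is no level $K$ with both $K^6/n^2=o(n^{-3/2})$ and $\PP(\hS_{\alpha_n}(a)>K)=o(n^{-3/2})$, and your proof of (\ref{pr-R2}) does not go through.

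The paper's proof avoids ever needing a tail bound on $\hS_{\alpha_n}(a)$, and this is the idea you are missing: on the event where $\hS_k(a)$ becomes large you no longer need tree excess at most $1$. Concretely, once $\hS_k(a)\geq 2\dmin\alpha_n$ for some $k\leq\alpha_n$, the target inequality $S_j(a)\geq 1+(\dmin-2)j$ for $j\leq\alpha_n$ has slack of order $\alpha_n$, so any \emph{constant} excess $M$ is harmless from that time on; before that time $\hS$ is below $2\dmin\alpha_n$ and the excess is dominated, via (\ref{upinc11}), by $\Bin\left((2\dmin+1)\alpha_n,\,2\dmin\alpha_n/(n-2\alpha_n)\right)$, whose probability of being $\geq 2$ is $O(\alpha_n^4/n^2)=o(n^{-3/2})$. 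The only additional estimate is $\PP(X_{\alpha_n}(a)\geq M)=O\left((\Delta_n^2\alpha_n^2/n)^M\right)$, obtained from (\ref{upinc11}) with the worst-case bound $\hS\leq\alpha_n\Delta_n$, and choosing $M=\lceil 2/\tau\rceil$ (this is where Condition~\ref{cond-dil}(iv) really enters) makes it $o(n^{-3})$. Restructure your Step~3 along these lines rather than trying to control the tail of $\hS_{\alpha_n}(a)$.
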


\begin{proof}
Note that Since $\dmin\geq 3$, the sequence $\hS_k(a)$ is
non-decreasing in $k$.
We also have for all $k\leq \alpha_n$,
$$\dmin + (\dmin-2)k \leq \hS_k(a) \leq \alpha_n\Delta_n = o(n) .$$
Hence we have
\begin{eqnarray*}
\{X_k(a)=0\}\subset R(a), \quad \{X_k(a)\leq 1\}\subset R'(a).
\end{eqnarray*}
We distinguish two cases:
\begin{itemize}
\item {\bf Case 1:} $\hS_{\alpha_n}(a) < 2 \dmin  \alpha_n$.
Then given this event denoted by $Q_1$, by (\ref{upinc11}), we have
\begin{eqnarray*}
X_{\alpha_n}(a) \leq_{st} \Bin\left( (2\dmin +1)\alpha_n, \frac{2\dmin
    \alpha_n}{n-2\alpha_n}\right).
\end{eqnarray*}
Hence we have
\begin{align*}
\PP\left(X_{\alpha_n}(a) \geq 1 \mid Q_1 \right) \leq
O(\alpha_n^2/n),
\end{align*}
and
\begin{align*}
\PP\left(X_{\alpha_n}(a) \geq 2 \mid Q_1 \right) \leq
O(\alpha_n^4/n^2).
\end{align*}
Then we have
\begin{eqnarray*}
\PP(R(a)^c \mid Q_1) &\leq & O(\alpha_n^2/n),\\
\PP(R'(a)^c \mid Q_1) &\leq & O(\alpha_n^4/n^2).
\end{eqnarray*}

\item {\bf Case 2.} if $\hS_{\alpha_n}(a) \geq 2 \dmin  \alpha_n$, we
  have $$\hS_{\alpha_n}(a)\leq \alpha_n\Delta_n = o(n).$$
Let $k=\max\{i,\:\hS_i(a)< 2\dmin \alpha_n\}$. Then on the event
$Q_1^c$, we have $k\leq \alpha_n$ and by a similar argument as in case
1, given the event $Q_1^c$, we obtain
\begin{eqnarray*}
X_k(a)\leq_{st} \Bin\left(
  (2\dmin+1)\alpha_n,\frac{2\dmin\alpha_n}{n-2\alpha_n}\right).
\end{eqnarray*}
Then given the event $Q_1^c$, we have by (\ref{upinc11}):
$$X_{\alpha_n}(a) \leq_{st} \Bin\left(\alpha_n(\Delta_n+1) ,
  \frac{\alpha_n\Delta_n}{n-2\alpha_n} \right).$$

Then letting $M = \lceil 2 \tau^{-1}\rceil$, and by Chernoff's inequlaity we have
\begin{eqnarray*}
\PP\left(X_{\alpha_n}(a) \geq M \mid Q_1^c \right)  \leq
 O\left( (\Delta^2_n \alpha^2_n / n)^M \right) =  o(n^{-3}) .
\end{eqnarray*}
Note that for $n$ large enough $$2\dmin \alpha_n - 2M > \dmin +
(\dmin-2)\alpha_n.$$
Hence we have
\begin{eqnarray*}
\{X_k(a)=0, \:X_{\alpha_n}(a)\leq M,\:Q_1^c\} &\subset& R(a)\cap
Q_1^c\mbox{,}  \\
\{X_k(a)\leq 1, \:X_{\alpha_n}(a)\leq M,\:Q_1^c\}
&\subset& R'(a)\cap Q_1^c
\end{eqnarray*}
Then we have
\begin{eqnarray*}
\PP\left(R(a)^c\mid Q_1^c \right) \leq
O(\alpha_n^2/n), \\
\PP\left(R'(a)^c\mid Q_1^c \right) \leq
O(\alpha_n^4/n^2).
\end{eqnarray*}
\end{itemize}
The lemma follows.
\end{proof}

We will use the following properties: if $Y$ is an exponential random
variable with mean $\mu^{-1}$, then for any $\theta<\mu$, we have
$\EE\left[ e^{\theta Y}\right] = \frac{\mu}{\mu-\theta}$.
Given the sequence $S_k(a)$, for $k<i^*$, the random variables
$\tau_{k+1}-\tau_k$ are iid exponential random variables with mean
$S_k(a)^{-1}$ .

\begin{lemma}\label{lem-up-r}
For a uniformly chosen vertex $a$, and any $\epsilon>0$, we have
\begin{eqnarray*}
\PP\left( T_a(\alpha_n)\geq \epsilon \log n + l \right) = o(n^{-1} + e^{-\dmin l}) .
\end{eqnarray*}
\end{lemma}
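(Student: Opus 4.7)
The plan is to bound $T_a(\alpha_n)=\sum_{k=0}^{\alpha_n-1}(\tau_{k+1}-\tau_k)$ by a Chernoff-type estimate on its moment generating function, carried out on the good event $R(a)$ of Lemma~\ref{lem-R1} where $S_k(a)\geq \dmin+(\dmin-2)k$ for every $1\leq k\leq \alpha_n$. Conditional on $\cF_{\tau_k}$ each increment $\tau_{k+1}-\tau_k$ is $\mathrm{Exp}(S_k(a))$, so on $R(a)$ these increments are stochastically dominated by independent exponentials with deterministic, linearly growing rates.

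First I would peel off the bad event by writing
\begin{eqnarray*}
\PP(T_a(\alpha_n)\geq \epsilon\log n+l) &\leq& \PP(R(a)^c) \\
&& +\ \PP(T_a(\alpha_n)\geq \epsilon\log n+l,\,R(a)),
\end{eqnarray*}
and absorbing the first piece via $\PP(R(a)^c)=o(\log^{10}n/n)$ from Lemma~\ref{lem-R1}. On $R(a)$ a standard exponential coupling yields the domination
$$T_a(\alpha_n)\leq \frac{E_0}{\dmin}+Z, \qquad Z:=\sum_{k=1}^{\alpha_n-1}\frac{E_k}{\dmin+(\dmin-2)k},$$
with $(E_k)$ i.i.d.\ $\mathrm{Exp}(1)$. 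The two summands play very different roles: $E_0/\dmin$ is a single exponential of rate $\dmin$ producing the $e^{-\dmin l}$ tail, while $Z$ is concentrated around its $O(\log\log n)$ mean and will supply the $n^{-\dmin\epsilon}$ factor.

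I would then union-bound
$$\PP\!\left(\frac{E_0}{\dmin}+Z\geq \epsilon\log n+l\right)\leq \PP\!\left(\frac{E_0}{\dmin}\geq l\right) + \PP(Z\geq \epsilon\log n),$$
the first term giving exactly $e^{-\dmin l}$. For the second, a direct computation of $\EE[e^{\dmin Z}]$ as a telescoping product, combined with $\log(1+x)\leq x$ and $\sum_{k=1}^{\alpha_n}1/k\sim \log\alpha_n=3\log\log n$, yields the polylogarithmic bound $\EE[e^{\dmin Z}]\leq C(\log n)^{3\dmin/(\dmin-2)}$. Markov then gives $\PP(Z\geq \epsilon\log n)\leq C(\log n)^{C'}n^{-\dmin\epsilon}$, which is comfortably swallowed by $n^{-1}+e^{-\dmin l}$ for any fixed $\epsilon>0$.

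The main technical hurdle is precisely this decoupling of the starting step from the rest of the walk. The combined MGF of $E_0/\dmin+Z$ diverges at $\theta=\dmin$ because the starting rate $d_a$ can equal $\dmin$ --- an honest atom of the degree distribution by Condition~\ref{cond-dil}(v). Any single Chernoff exponent $\theta<\dmin$ would dilute the sharp rate $\dmin$ in front of $l$; isolating $\tau_1$ and treating it with its exact exponential tail, while reserving the MGF argument for the remainder $Z$, is what keeps both the sharp $e^{-\dmin l}$ factor and a polylog-in-$n$ prefactor. Once this separation is in place, the remaining estimates are routine.
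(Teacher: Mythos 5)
Your Chernoff strategy on the good event $R(a)$ matches the paper's Case~1, but the way you dispose of the bad event is a genuine gap. You write that $\PP(R(a)^c)=o(\log^{10}n/n)$ can be ``absorbed'' into the error term, but $o(\log^{10}n/n)$ is \emph{not} $o(n^{-1}+e^{-\dmin l})$: when $l$ is of order $\log n$ (which is exactly how the lemma is used, with $l\approx\frac{(1+\epsilon)\log n}{\dmin}$, to allow a union bound over all $n$ vertices) the target is essentially $o(n^{-1})$, and a leftover $\log^{6}n/n$-type term destroys it. This is precisely why the paper introduces the weaker event $R'(a)$ (tree excess at most one, so $S_k(a)\geq 1+(\dmin-2)k$), for which $\PP(R'(a)^c)=o(n^{-3/2})$ \emph{can} be absorbed, and then runs a second moment-generating-function bound \emph{conditionally on} $R(a)^c\cap R'(a)$ with exponent $\theta=1$: this gives a conditional tail $o(n^{-\epsilon/4})$, and the product $\PP(R(a)^c)\cdot o(n^{-\epsilon/4})=o(\log^{10}n/n)\cdot n^{-\epsilon/4}=o(n^{-1})$ is what makes the bookkeeping close. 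Without some such second estimate on $R(a)^c$, your proof only yields $o(\log^{10}n/n+e^{-\dmin l})$, which neither proves the stated lemma nor suffices for the diameter upper bound.

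A secondary, fixable slip: in your union bound you give the first increment only the budget $l$, so that term is exactly $e^{-\dmin l}$, which is not $o(n^{-1}+e^{-\dmin l})$ for fixed $l$. The paper keeps the full threshold (or you can allot $\tfrac{\epsilon}{2}\log n+l$ to the first step), producing the extra factor $n^{-\epsilon\dmin/2}$ that makes this contribution $o(e^{-\dmin l})$ uniformly in $l$. Your isolation of the first step from the MGF at $\theta=\dmin$ (since $d_a$ may equal $\dmin$) is correct and is exactly what the paper does via $T_a(1)\leq_{st}\mathrm{Exp}(\dmin)$.
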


\begin{proof}
Assume that $R'(a)$ holds and consider the following two cases:

\noindent {\bf Case 1: $R(a)$ holds.}

\noindent In this case, for any $k<\alpha_n$ we have conditioning on $R(a)\cap R'(a)$,
$$\tau_{k+1} - \tau_k \leq_{st} Y_k = \text{Exp}\left(\dmin+(\dmin-2)k\right),$$
and all $Y_k$'s are independent.
Hence, we have

\begin{eqnarray*}
\EE\left[ e^{\dmin (T_a(\alpha_n) - T_a(1))}\mid R(a)\cap R'(a)\right] \\
\leq \prod_{i=1}^{\alpha_n-1}\left(1 + \frac{\dmin}{(\dmin-2)i}\right)  \\
\leq \exp\left[\frac{\dmin}{\dmin-2} \sum_{i=1}^{\alpha_n-1} \frac{1}{i} \right] \\
\leq \alpha_n^{\dmin} = (\log n)^{3\dmin} ,
\end{eqnarray*}
for $n$ large enough. Then for $\epsilon > 0$, by Markov's inequality, we have
\begin{eqnarray*}
\PP\left( T_a(\alpha_n) - T_a(1) \geq \epsilon \log n + l \mid
  R(a)\cap R'(a)\right) \\ \leq (\log
n)^{3\dmin}\exp(-\dmin(\epsilon\log n +l) )\\
= \frac{(\log n)^{3\dmin}}{n^{\epsilon \dmin}}e^{-\dmin l}
= o(e^{-\dmin l}).
\end{eqnarray*}
We also have $T_a(1) \leq_{\text{st}} \text{Exp} (\dmin)$; hence
$$\PP(T_a(1)\geq \epsilon \log n + l)\leq
\frac{\exp(-l\dmin)}{n^{\epsilon \dmin}},$$ and we have
\begin{eqnarray*}
\PP\left( T_a(\alpha_n)\geq \epsilon \log n + l \mid R(a)\cap R'(a)
\right)= o(e^{-\dmin l}).
\end{eqnarray*}

\noindent {\bf Case 2: $R(a)$ doesn't hold.}

\noindent In this case, for any $k<\alpha_n$ we have conditioning on $R(a)^c\cap R'(a)$,
$$\tau_{k+1} - \tau_k \leq_{st} Y_k = \text{Exp}(1 + (\dmin-2)k),$$
and all the $Y_k$'s are independent.
We have
\begin{eqnarray*}
\EE \left[ e^{(T_a(\alpha_n) - T_a(1))}\mid R(a)^c\cap R'(a) \right] \\ \leq \prod_{i=1}^{\alpha_n-1}\left(1 + \frac{1}{(\dmin-2)i}\right)\\
\leq \exp\left[\frac{1}{\dmin-2} \sum_{i=2}^{\alpha_n-1} \frac{1}{i} \right] \\
\leq \alpha_n = \log^3 n,
\end{eqnarray*}
for $n$ large enough. Again by Markov's inequality, we have
\begin{eqnarray*}
\PP\left( T_a(\alpha_n) - T_a(1) \geq \epsilon \log n + l \mid R(a)^c\cap R'(a)\right) \\ \leq \log^3 n \exp (- \epsilon \log n - l ) =  o(n^{-\epsilon/4}).
\end{eqnarray*}
We also have $T_a(1) \leq_{\text{st}} \text{Exp} (1)$, and we conclude in this case
\begin{eqnarray*}
\PP\left( T_a(\alpha_n) \geq \epsilon \log n + l \mid R(a)^c\cap R'(a)
\right) = o(n^{-\epsilon/4}) .
\end{eqnarray*}

\noindent Putting all these together we have
\begin{eqnarray*}
\PP\left( T_a(\alpha_n)\geq \epsilon \log n + l \right) \leq 1 - \PP(R'(a)) +\\  (1 - \PP(R(a))) n^{-\epsilon/4} + o(e^{-\dmin l} )  \\ \leq  o(n^{-1}+ e^{-\dmin l}),
\end{eqnarray*}
as desired.
\end{proof}

We continue with a simple large deviation estimate.
\begin{lemma}\label{lem-upp}
Let $\uD^{(n)}_i$ be i.i.d. with distribution $\upi^{(n)}$. For any $\eta < \nu$,
there is a constant $\gamma > 0$  such that for $n$ large enough we have
\begin{eqnarray}\label{eq-upp}
\PP\left(\uD^{(n)}_1+\dots+\uD^{(n)}_k\leq k \eta \right)\leq e^{-\gamma k}.
\end{eqnarray}
\end{lemma}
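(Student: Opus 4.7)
This is a standard Cram\'er-type lower-tail large deviation bound for a sum of i.i.d.\ non-negative integer-valued random variables, so the natural approach is an exponential Markov inequality. For any $\theta>0$, independence gives
\begin{equation*}
\PP\left(\sum_{i=1}^k \uD^{(n)}_i \leq k\eta\right) \leq e^{\theta k \eta}\,\phi_n(\theta)^k,
\end{equation*}
where $\phi_n(\theta):=\EE\bigl[e^{-\theta \uD^{(n)}_1}\bigr]=\sum_{r\geq 0} \upi^{(n)}_r e^{-\theta r}$. Hence it suffices to exhibit a single $\theta_0>0$ such that $g_n(\theta_0):=e^{\theta_0\eta}\phi_n(\theta_0)\leq 1-\delta$ for all $n$ sufficiently large and some $\delta>0$ independent of $n$; then $\gamma:=-\log(1-\delta)>0$ works.

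To produce such a $\theta_0$ I would pass to the limit. As noted in the paper just before Lemma~\ref{lem-coupl}, $\upi^{(n)}\to q$ weakly; since $x\mapsto e^{-\theta x}$ is bounded and continuous on $\NN$ for each fixed $\theta\geq 0$, this gives $\phi_n(\theta)\to\phi(\theta):=\sum_{r\geq 0} q_r e^{-\theta r}$. Condition~\ref{cond-dil}(i) together with (iii) forces $\sum_r r^2 p_r<\infty$, hence $\nu=\sum_r r q_r<\infty$ and $\phi$ is right-differentiable at $0$ with $\phi'(0^+)=-\nu$. Consequently $g(\theta):=e^{\theta\eta}\phi(\theta)$ satisfies $g(0)=1$ and $g'(0^+)=\eta-\nu<0$, so one can choose $\theta_0>0$ small enough and $\delta>0$ with $g(\theta_0)\leq 1-2\delta$; pointwise convergence $g_n(\theta_0)\to g(\theta_0)$ then yields $g_n(\theta_0)\leq 1-\delta$ for all $n$ large, which is exactly the required bound.

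The only mildly delicate point is the claim $\phi'(0^+)=-\nu$. It can be justified by monotone convergence in $(1-\phi(\theta))/\theta=\sum_r q_r(1-e^{-\theta r})/\theta$ as $\theta\downarrow 0$, using $\nu<\infty$, or bypassed entirely by the elementary truncation $\phi(\theta)\leq \PP(D_q>K)+\sum_{r\leq K} q_r(1-\theta r+\tfrac12\theta^2 r^2)$: choosing $K$ large so that $\EE[D_q\,\ind_{\{D_q\leq K\}}]\geq (\nu+\eta)/2$ gives $\phi(\theta)\leq 1-\theta(\nu+\eta)/2+\tfrac12 K^2\theta^2$, and combining with $e^{\theta\eta}\leq 1+\theta\eta+O(\theta^2)$ makes $g(\theta_0)<1$ explicit for small $\theta_0$. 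Beyond this standard bookkeeping I do not foresee any obstacle.
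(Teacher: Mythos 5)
Your proof is correct and follows essentially the same route as the paper's: an exponential Chernoff bound with the Laplace transform $\uphi^{(n)}(\theta)=\EE[e^{-\theta \uD^{(n)}_1}]$, the convergence $\uphi^{(n)}(\theta)\to\phi(\theta)$ coming from $\upi^{(n)}\to q$, and the small-$\theta$ behavior of $\phi$ (the paper states $\log\phi(\theta)<(-\nu+\epsilon)\theta$ for small $\theta$, which is exactly your $\phi'(0^+)=-\nu$ claim, and your monotone-convergence/truncation justification of it is a welcome extra detail the paper leaves implicit).
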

\begin{proof}
Let $D^*$ be a random variable with distribution $\PP(D^* = k) =
q_k$ given in (\ref{eq:defq}) so that $\EE[D^*]=\nu$.
Let $\phi(\theta) = \EE[e^{-\theta D^*}]$. For any
$\epsilon>0$, there exists $\theta_0>0$ such that for any $\theta\in
(0,\theta_0)$, we have
\begin{eqnarray*}
\log \phi(\theta)< (- \nu + \epsilon )\theta.
\end{eqnarray*}
By Condition \ref{cond-dil} and the fact that $\beta_n \Delta_n =
o(n)$, i.e. $\sum_{i=n-\beta_n+1}^n d_{(i)}^{(n)} = o(n)$, we have, for
any $\theta>0$,
$$\lim_{n \rightarrow \infty} \uphi^{(n)}(\theta) = \phi(\theta) , $$
where  $\uphi^{(n)}(\theta)=\EE[e^{-\theta \uD^{(n)}_1}]$. We have for
$\theta>0$,
\begin{eqnarray*}
\PP\left(\uD^{(n)}_1+\dots+\uD^{(n)}_k\leq \eta k\right) \leq
\exp\left(k\left(\theta\eta +\log
    \uphi^{(n)}(\theta) \right)\right) .
\end{eqnarray*}
Fix $\theta<\theta_0$ and let $n$ be sufficiently large so that
$\log \uphi^{(n)}(\theta)\leq \log \phi(\theta)+\epsilon$. This yields
\begin{eqnarray*}
\PP\left(\uD^{(n)}_1+\dots+\uD^{(n)}_k\leq \eta k\right) \leq
\exp\left(k\left(\theta\eta +\log
    \phi(\theta) + \epsilon \theta \right)\right)\\
\leq \exp\left(k\theta\left(\eta -\nu+2 \epsilon \right)\right),
\end{eqnarray*}
which concludes the proof.
\end{proof}

We now prove:
\begin{lemma}\label{lem-R}
For any $\epsilon>0$, we define the event
\begin{eqnarray*}
R''(a):=\left\{ S_k(a) \geq \frac{\nu-1}{1+\epsilon}k,\mbox{ for all } \alpha_n\leq k\leq \beta_n \right\}.
\end{eqnarray*}
For a uniformly chosen vertex $a$, we have $$\PP(R''(a))\geq
1-o(n^{-3/2}).$$
\end{lemma}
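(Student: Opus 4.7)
My plan is to write $S_k(a) = \hS_k(a) - 2X_k(a)$ and show, uniformly for $\alpha_n \le k \le \beta_n$, a lower bound $\hS_k(a) \gtrsim (\nu-1)k$ together with an upper bound $X_k(a) = o(k)$. These two immediately give $S_k(a) \ge \frac{\nu-1}{1+\epsilon}k$, and positivity of the right-hand side also forces $i^*>\beta_n$, so the decomposition is legitimate throughout $[\alpha_n,\beta_n]$. I will in fact show that each bound fails with probability $o(n^{-M})$ for every $M$, which is stronger than what is claimed.

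For the lower bound, I would apply Lemma~\ref{lem-coupl} to stochastically dominate $\sum_{i=1}^k \hd_i$ from below by $\sum_{i=1}^k \uD^{(n)}_i$ and then apply Lemma~\ref{lem-upp} with $\eta = \nu - \epsilon/8 > 1$. Since the resulting tail $e^{-\gamma k}$ is super-polynomially small for $k \ge \alpha_n = \log^3 n$, a union bound over the at most $\beta_n$ values of $k$ yields $\hS_k(a) \ge (\nu - 1 - \epsilon/8)k$ uniformly. I would then run the symmetric Chernoff argument with $\bD^{(n)}_i$ in the opposite direction to obtain, on the same event, $\hS_k(a) \le d_a + (\nu - 1 + \epsilon/8)k$ uniformly; this side of the estimate is needed only to keep the parameters of the binomial dominating $X_k$ in (\ref{upinc11}) under control.

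Given these two envelopes, the increment bound (\ref{upinc11}) (valid for $k<n/2$, hence throughout $[\alpha_n,\beta_n]$) gives
\[
X_k(a) \le_{st} \Bin\bigl(d_a + O(k),\,(d_a+O(k))/(n-2k)\bigr).
\]
Using $d_a \le \Delta_n = O(n^{1/2-\tau})$ and $k \le \beta_n = O(\sqrt{n\log n})$, the mean of this binomial is $O((\Delta_n+k)^2/n) = O(\log n)$, which is already dwarfed by $\epsilon' k$ at the smallest scale $k = \alpha_n$. A Chernoff estimate of the form $\PP(\Bin(N,p) \ge \lambda) \le (eNp/\lambda)^{\lambda}$ applied with $\lambda = \epsilon' k/2$ then produces $\PP(2X_k(a) \ge \epsilon' k) \le (O(\log n)/(\epsilon' k))^{\epsilon' k/2}$, which is super-polynomially small already at $k = \alpha_n$ and survives the union bound over $[\alpha_n,\beta_n]$. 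Combining with the lower bound and choosing $\epsilon'$ and $\epsilon/8$ small enough completes the argument. The one mildly delicate point is verifying that the Chernoff bound on $X_k$ remains effective at the threshold $k = \alpha_n$; this is comfortable because the target $\epsilon' k \approx \log^3 n$ exceeds the binomial mean by a factor of order $\log^2 n$, which provides ample headroom.
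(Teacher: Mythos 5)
There is a genuine gap, and it sits exactly where you flag the argument as "symmetric": the upper envelope $\hS_k(a) \le d_a + (\nu-1+\epsilon/8)k$ cannot be obtained by a Chernoff bound under Condition~\ref{cond-dil}. Lemma~\ref{lem-upp} is purely a \emph{lower}-tail estimate — it works with $\phi(\theta)=\EE[e^{-\theta D^*}]$, which always exists — whereas an upper-tail Chernoff bound for $\sum_{i\le k}\bD^{(n)}_i$ would require exponential moments of the size-biased degree. Condition~\ref{cond-dil} only gives $\sum_i d_i^2=O(n)$ and $\Delta_n=O(n^{1/2-\tau})$, so the size-biased law can have a polynomial tail (e.g.\ $p_r\sim r^{-4.2}$ gives $q_r \sim r^{-3.2}$); then $\PP\bigl(\sum_{i\le k}\hd_i \ge (\nu+\delta)k\bigr)$ is at least of order $k\,\PP(\hd_1\ge \delta k)$, which at the critical scale $k=\alpha_n=\log^3 n$ is only polylogarithmically small — nowhere near the $o(n^{-3/2})$ the lemma needs, let alone $o(n^{-M})$. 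Without that envelope your control of (\ref{upinc11}) collapses: the only unconditional bound is $\max_{\ell\le k}\hS_\ell(a)\le (k+1)\Delta_n$, and then the binomial mean can be as large as order $k^2\Delta_n^2/n$, which for $k$ near $\beta_n$ and $\tau<1/4$ vastly exceeds $k$, so you cannot conclude $X_k(a)=o(k)$ this way. The paper's proof is built precisely to avoid any upper-tail information on $\hS_k$: it uses Lemma~\ref{lem:upX}, which gives $2X_k(a)\le_{st}\Bin\bigl(\hS_k(a),\sqrt{\hS_k(a)/n}\bigr)$ conditionally on $\hS_k(a)$ and $\{i^*\ge k\}$, together with the monotonicity of $x\mapsto x-\Bin(x,\sqrt{x/n})$ from Corollary~\ref{cor-bin}; a large $\hS_k$ then only helps $S_k$, so the deterministic bound $\hS_k\le(k+1)\Delta_n=o(n)$ plus the lower envelope (your Lemma~\ref{lem-coupl}/\ref{lem-upp} step, which does match the paper) suffice. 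Your route misses this monotonicity idea, and that is the essential content of the lemma under heavy-tailed degrees.

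A secondary problem is your claim that positivity of $\hS_k(a)-2X_k(a)$ on $[\alpha_n,\beta_n]$ "forces $i^*>\beta_n$". After $i^*$ the extended quantities satisfy only $S_k(a)\le \hS_k(a)-2X_k(a)$, so positivity of the right-hand side on $[\alpha_n,\beta_n]$ rules out extinction \emph{inside} that window but says nothing about extinction before $\alpha_n$, where your envelopes give no useful probability bounds. The paper handles this separately: the event $R'(a)$ of Lemma~\ref{lem-R1} (built on $\dmin\ge3$ and the smallness of the early tree excess) guarantees $i^*\ge\alpha_n$ up to probability $o(n^{-3/2})$, and the proof then chains the events $R''_k$ using $R''_{k-1}(a)\subset\{i^*\ge k\}$ to keep the conditioning on $\{i^*\ge k\}$ legitimate at every step. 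You would need both of these ingredients (or substitutes for them) to make your decomposition valid on the whole range.
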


\noindent To prove this, we need the following intermediate result
proved in \cite[Theorem 1]{KM2010}:
\begin{lemma}\label{lem-bin}
Let $n_1, n_2 \in \NN$ and $p_1, p_2 \in (0,1)$. We have $\Bin(n_1,p_1) \leq_{st} \Bin(n_2,p_2)$ if and only if the following conditions hold
\begin{itemize}
  \item[$(i)$] $n_1 \leq n_2$,
  \item[$(ii)$] $(1-p_1)^{n_1} \geq (1-p_2)^{n_2}$.
\end{itemize}
\end{lemma}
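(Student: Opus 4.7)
The statement is an equivalence, so I will handle the two directions separately. For \emph{necessity}, both conditions follow from well-chosen instances of the defining inequality $\PP(\Bin(n_1,p_1) > x) \leq \PP(\Bin(n_2,p_2) > x)$. Setting $x = 0$ directly yields (ii), since $\PP(\Bin(n,p) = 0) = (1-p)^n$. Setting $x = n_2$ forces (i): if $n_1 > n_2$ then the right-hand side vanishes while the left-hand side is at least $p_1^{n_1} > 0$, a contradiction.

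For \emph{sufficiency}, assume (i) and (ii). The first step is a reduction to the boundary case in which (ii) holds with equality. If $(1-p_1)^{n_1} > (1-p_2)^{n_2}$, define $p^* \in (0, p_2)$ by $(1-p^*)^{n_2} = (1-p_1)^{n_1}$; since $p^* < p_2$, the standard thinning coupling of Bernoulli$(p^*)$ inside Bernoulli$(p_2)$ gives $\Bin(n_2,p^*) \leq_{st} \Bin(n_2,p_2)$, so by transitivity it suffices to prove $\Bin(n_1,p_1) \leq_{st} \Bin(n_2,p^*)$, i.e., the boundary case.

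For the boundary case with $(1-p_1)^{n_1} = (1-p_2)^{n_2}$ and $n_1 \leq n_2$, I would build an explicit coupling via a grouping device. Let $B_1,\dots,B_{n_2}$ be iid Bernoulli$(p_2)$ with $Y = \sum_i B_i$, partition $\{1,\dots,n_2\}$ into $n_1$ groups $G_1,\dots,G_{n_1}$, and set $\tilde B_j = \max_{i \in G_j} B_i$, so that $X' := \sum_j \tilde B_j \leq Y$ pointwise. When $n_1 \mid n_2$, choosing each $|G_j| = m := n_2/n_1$ makes $\tilde B_j \sim \text{Bernoulli}(1-(1-p_2)^m) = \text{Bernoulli}(p_1)$ by the boundary identity, and independence of the $\tilde B_j$ gives $X' \sim \Bin(n_1,p_1)$, completing the coupling.

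The principal obstacle is the non-divisible case, where uneven group sizes force the $\tilde B_j$ to have success probabilities that straddle $p_1$ and $X'$ is no longer binomial. I would handle this by passing to the quantile coupling and using the equivalent CDF formulation: $\Bin(n_1,p_1) \leq_{st} \Bin(n_2,p_2)$ is equivalent to $F_X(k) \geq F_Y(k)$ for every $k \in \NN$. The base case $k = 0$ is the boundary identity, and the inductive step would combine the multiplicative recursion $\PP(\Bin(n,p) = k+1) = \frac{(n-k)p}{(k+1)(1-p)}\,\PP(\Bin(n,p) = k)$ with the constraint $n_1 \leq n_2$ to compare the mass ratios of the two distributions uniformly in $k$.
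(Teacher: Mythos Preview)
The paper does not give its own proof of this lemma: it simply cites \cite[Theorem 1]{KM2010} (Klenke--Mattner). So there is no in-paper argument to compare against; what matters is whether your direct proof is complete.

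Your necessity argument is correct, and your reduction of sufficiency to the boundary case $(1-p_1)^{n_1}=(1-p_2)^{n_2}$ via $p^*$ is clean. The grouping coupling when $n_1\mid n_2$ is also correct.

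The genuine gap is the non-divisible case. Your plan there is to run an induction on $k$ using the recursion $\PP(\Bin(n,p)=k+1)=\frac{(n-k)p}{(k+1)(1-p)}\,\PP(\Bin(n,p)=k)$ and ``compare the mass ratios uniformly in $k$''. But a monotone mass-ratio (likelihood-ratio) comparison is \emph{false} on the boundary, so this route cannot work as stated. Concretely, take $n_1=1$, $n_2=2$, $p_2=\tfrac12$, so $(1-p_1)=(1-p_2)^2=\tfrac14$ and $p_1=\tfrac34$. Then
\[
\frac{p_X(0)}{p_Y(0)}=\frac{1/4}{1/4}=1,\qquad
\frac{p_X(1)}{p_Y(1)}=\frac{3/4}{1/2}=\frac32,
\]
so $p_X(j)/p_Y(j)$ is \emph{increasing}, not decreasing; hence $\Bin(n_1,p_1)$ is not dominated by $\Bin(n_2,p_2)$ in likelihood-ratio order even though the stochastic order holds. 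Any induction that tries to propagate $F_X(k)\ge F_Y(k)$ using only the one-step pmf ratio will therefore fail at the very first step, since $F_X(0)=F_Y(0)$ while $p_X(1)>p_Y(1)$; the inequality at $k=1$ is rescued only because $X$ has no mass beyond $1$, not by any ratio comparison.

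The actual proof in Klenke--Mattner proceeds differently: after the same reduction to the boundary, they fix $q=(1-p)^n$ and show that $n\mapsto \Bin\!\big(n,\,1-q^{1/n}\big)$ is stochastically increasing by handling the single step $n\to n+1$ via a direct CDF computation (equivalently, the regularized incomplete beta representation of the binomial CDF). If you want a self-contained argument, that single-step reduction is the missing ingredient; your grouping coupling then handles only the special sub-case where one step multiplies $n$ by an integer.
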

\noindent In particular, we have
\begin{corollary} \label{cor-bin}
If $x \leq y = o(n)$, we have
$$x - \Bin (x,\sqrt{x/n}) \leq_{st} y - \Bin (y, \sqrt{y/n}) .$$
\end{corollary}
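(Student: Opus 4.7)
The plan is to reduce the claim to a direct application of Lemma~\ref{lem-bin} by passing to the complementary binomials. Recall that if $B\sim \Bin(n,p)$ then $n-B\sim \Bin(n,1-p)$, so
$$x - \Bin(x,\sqrt{x/n}) \stackrel{d}{=} \Bin\bigl(x,\,1-\sqrt{x/n}\bigr),$$
and analogously with $y$ in place of $x$. Hence the corollary is equivalent to $\Bin(x,1-\sqrt{x/n}) \leq_{st} \Bin(y,1-\sqrt{y/n})$, and I would simply invoke Lemma~\ref{lem-bin} with $n_1=x$, $n_2=y$, $p_1=1-\sqrt{x/n}$ and $p_2=1-\sqrt{y/n}$.

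Condition $(i)$ of Lemma~\ref{lem-bin}, namely $n_1\leq n_2$, is immediate from the hypothesis $x\leq y$. For condition $(ii)$, the required inequality $(1-p_1)^{n_1}\geq (1-p_2)^{n_2}$ becomes
$$\bigl(\sqrt{x/n}\bigr)^{x} \geq \bigl(\sqrt{y/n}\bigr)^{y}.$$
Taking logarithms and multiplying by $-2$, this is equivalent to $x\log(n/x)\leq y\log(n/y)$. Set $g(z) = z\log(n/z)$; then $g'(z) = \log(n/z)-1$, so $g$ is strictly increasing on $(0,n/e)$. Because $y=o(n)$, we have $y<n/e$ for all sufficiently large $n$, and together with $x\leq y$ this yields $g(x)\leq g(y)$, establishing $(ii)$.

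I do not expect a real obstacle here beyond the cleanliness of the reduction: everything hinges on the complement identity $n-\Bin(n,p)=\Bin(n,1-p)$, which turns the ``deficit'' of successes back into a binomial so that the characterization of stochastic domination between binomials supplied by Lemma~\ref{lem-bin} can finish the job. The only delicate point is invoking $y=o(n)$ to place us in the increasing regime of $g$, which is exactly what the hypothesis is designed to provide.
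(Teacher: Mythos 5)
Your proof is correct and follows essentially the same route as the paper: pass to the complementary binomials and apply Lemma~\ref{lem-bin}, reducing condition $(ii)$ to $(x/n)^{x/2}\geq (y/n)^{y/2}$. Your verification via the monotonicity of $g(z)=z\log(n/z)$ on $(0,n/e)$ is just a rephrasing of the paper's observation that $s^s$ is decreasing for $s<e^{-1}$, with $y=o(n)$ used in the same way to land in the monotone regime.
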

\begin{proof}
By the above lemma, it is sufficient to show
$$(x/n)^{x/2} \geq (y/n)^{y/2} ,$$ and this is true because $s^s$ is decreasing near zero (for $s < e^{-1}$).
\end{proof}
\noindent Now, we go back to the proof of Lemma \ref{lem-R}.

\begin{proof}[Proof of Lemma  \ref{lem-R}]
Note that by Lemma ~\ref{lem-R1} we have
\begin{eqnarray*}
 \PP(i^* \geq \alpha_n) \geq \PP(R'(a)) \geq 1 - o(n^{-3/2}).
\end{eqnarray*}
Then we get
\begin{eqnarray*}
 \PP(R''(a)) \geq 1  - \PP(i^* < \alpha_n) - \PP(R''(a)^c,\: i^* \geq \alpha_n) \\ \geq  1 - o(n^{-3/2}) - \PP(R''(a)^c \mid i^* \geq \alpha_n) .
\end{eqnarray*}
Then to prove Lemma \ref{lem-R} it suffices to prove
$$\PP(R''(a) \mid i^* \geq \alpha_n) \geq 1 - o(n^{-3/2}).$$
By Lemmas \ref{lem-coupl} and \ref{lem-upp}, for any $\epsilon > 0$,
$k \geq \alpha_n$ and $n$ large enough, we have
$$\PP \left( \hd_1 + ... + \hd_k \leq \frac{\nu}{1+\epsilon/2}k \right) \leq e^{-\gamma k} = o(n^{-6}). $$
Then with probability at least $1 - o(n^{-6})$, for any $k\leq \beta_n$,
$$\dmin + \frac{\nu-1}{1+\epsilon/2}k < d_a + \hd_1 + ... + \hd_k - k < (k+1) \Delta_n = o(n) .$$

\noindent By the union bound on $k$, with
probability at least $1 - o(n^{-5})$, we have for all $\alpha_n \leq k \leq \beta_n$,
\begin{eqnarray}\label{ineq:S}
&& \dmin + \frac{\nu-1}{1+\epsilon/2}k < \hS_k(a) < (k+1) \Delta_n = o(n) .
\end{eqnarray}
Then in the remaining of the proof we can assume that the above condition is satisfied.

By Lemma \ref{lem:upX}, Corollary~\ref{cor-bin} and  (\ref{ineq:S}),
conditioning on $\hS_k(a)$ and $\{i^* \geq k\}$, we have:
\begin{eqnarray*}
S_k(a) \geq_{st} \dmin + \frac{\nu-1}{1+\epsilon/2} k - \\ \Bin \left(\dmin + \frac{\nu-1}{1+\epsilon} k, \sqrt{\left(\dmin + \frac{\nu-1}{1+\epsilon} k\right) / n}\right) \\ \geq_{st}
\dmin + \frac{\nu-1}{1+\epsilon/2} k - \Bin \left(\nu k, \sqrt{\nu k /
    n}\right).
\end{eqnarray*}

Corollary~\ref{cor-bin} and  (\ref{ineq:S}) imply that
\begin{eqnarray*}
S_k(a) \mid \{i^* \geq k\} \geq_{st} \dmin + \frac{\nu-1}{1+\epsilon} k - \\ \Bin \left(\dmin + \frac{\nu-1}{1+\epsilon} k, \sqrt{\left(\dmin + \frac{\nu-1}{1+\epsilon} k\right) / n}\right) \\ \geq_{st}
\dmin + \frac{\nu-1}{1+\epsilon} k - \Bin \left(\nu k, \sqrt{\nu k / n}\right) .
\end{eqnarray*}

By Chernoff's inequality, and as $k\sqrt{k/n} = o(k/\sqrt{\alpha_n})$, we have
\begin{eqnarray*}
\PP(\Bin(\nu k, \sqrt{\nu k /n})\geq k/\sqrt{\alpha_n}) &\leq& \exp \left(- \frac{1}{3} k/\sqrt{\alpha_n}\right)  \\
&=& o(n^{-6}) .
\end{eqnarray*}
Moreover, with probability at least $1-o(n^{-6})$, conditioned on $\{i^* \geq k\}$, we have
$$S_k(a)  \geq \dmin + \frac{\nu-1}{1+\epsilon} k - \frac{k}{\sqrt{\alpha_n}} \geq \frac{\nu-1}{1+2\epsilon} k,$$
for $n$ large enough. Then letting $$R''_k = \{ S_k(a) \geq \frac{\nu-1}{1+2\epsilon} k\},$$ we have
\begin{eqnarray}
 \PP(R''_k \mid i^* \geq k ) \geq 1- o(n^{-6}).
\end{eqnarray}

Then $R''(a) = \bigcap_{k=\alpha_n}^{\beta_n} R''_k(a)$, and by using the fact that $R''_{k-1}(a) \subset \{i^* \geq k\}$ we get
\begin{eqnarray*}
\PP(R''(a)) \mid i^* \geq \alpha_n) = 1 - \PP\left(\bigcup_{k=\alpha_n}^{\beta_n} R''_k(a)^c \mid i^* \geq \alpha_n \right) \\
= 1 - \PP\left(R''_{\alpha_n}(a)^c \bigcup_{k=\alpha_n+1}^{\beta_n} \left(R''_k(a)^c \cap R''_{k-1}(a)\right) \mid i^* \geq \alpha_n \right)\\
\geq  1 - \PP\left(R''_{\alpha_n}(a)^c \bigcup_{k=\alpha_n+1}^{\beta_n} \left(R''_k(a)^c \cap  \{i^* \geq k\}\right) \mid i^* \geq \alpha_n \right)\\
\geq 1 -  \sum_{k=\alpha_n}^{\beta_n} \PP(R''_k(a)^c | i^* \geq k)
\geq 1-o(n^{-5}),
\end{eqnarray*}
which concludes the proof.
\end{proof}

\begin{lemma}\label{lem-up-rq}
For a uniformly chosen vertex $a$ and any $\epsilon>0$, we have
\begin{eqnarray*}
\PP\left( T_a(\beta_n) - T_a(\alpha_n) \geq \frac{(1+\epsilon)\log n}{2(\nu-1)} \right) = o(n^{-1}) .
\end{eqnarray*}
\end{lemma}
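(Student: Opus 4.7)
The plan is to execute rigorously the MGF argument sketched in Section~\ref{sse:up}, using the lower bound on $S_k(a)$ supplied by Lemma~\ref{lem-R} to handle the tree excess. Fix an auxiliary $\epsilon' \in (0, \epsilon/3)$. Since $\PP(R''(a)^c) = o(n^{-3/2})$, which is negligible against the target $o(n^{-1})$, it suffices to bound the probability in question on $R''(a)$, where $S_k(a) \geq \tfrac{\nu-1}{1+\epsilon'} k$ for all $\alpha_n \leq k \leq \beta_n$. To obtain a usable stochastic domination despite this multi-step conditioning, I would introduce the stopping time $K := \inf\{k \geq \alpha_n : S_k(a) < \tfrac{\nu-1}{1+\epsilon'}k\}$, which satisfies $\{K > k\} \in \cF_{\tau_k}$ and $R''(a) = \{K > \beta_n\}$. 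Given $\cF_{\tau_k}$ and $\{K > k\}$, the increment $\tau_{k+1} - \tau_k$ is exponential with rate $S_k(a) \geq \tfrac{\nu-1}{1+\epsilon'}k$, hence stochastically dominated by an independent copy of $Y_k \sim \mathrm{Exp}\!\bigl(\tfrac{\nu-1}{1+\epsilon'}k\bigr)$. Applying Lemma~\ref{lem-domin} (extended trivially to non-identical step distributions) to the stopped increments then yields
\[
\bigl(T_a(\beta_n) - T_a(\alpha_n)\bigr)\,\ind_{R''(a)} \leq_{st} \sum_{k=\alpha_n}^{\beta_n-1} Y_k.
\]

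Next I would invoke the exponential MGF identity: for any $\theta \in \bigl(0,\tfrac{\nu-1}{1+\epsilon'}\alpha_n\bigr)$,
\[
\EE\!\left[\exp\!\Bigl(\theta \sum_{k=\alpha_n}^{\beta_n-1} Y_k\Bigr)\right] = \prod_{k=\alpha_n}^{\beta_n-1} \frac{1}{1 - \tfrac{(1+\epsilon')\theta}{(\nu-1)k}} \leq \exp\!\left(\sum_{k=\alpha_n}^{\beta_n-1} \frac{(1+\epsilon')\theta}{(\nu-1)k - (1+\epsilon')\theta}\right).
\]
Since $\alpha_n = \log^3 n \to \infty$, for fixed $\theta$ the denominator is $(1+o(1))(\nu-1)k$ uniformly in $k \geq \alpha_n$; combined with the elementary estimate $\sum_{k=\alpha_n}^{\beta_n-1} 1/k = \tfrac{1}{2}\log n\,(1+o(1))$ (using $\beta_n = \Theta(\sqrt{n \log n})$ and $\log \alpha_n = O(\log\log n)$), this bounds the right-hand side by $\exp\!\bigl(\tfrac{(1+2\epsilon')\theta}{2(\nu-1)}\log n\bigr)$ for $n$ large. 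Markov's inequality then gives
\[
\PP\!\left(\sum_{k=\alpha_n}^{\beta_n-1} Y_k \geq \tfrac{(1+\epsilon)\log n}{2(\nu-1)}\right) \leq n^{-\tfrac{(\epsilon - 2\epsilon')\theta}{2(\nu-1)}},
\]
and since $\epsilon - 2\epsilon' > 0$, choosing $\theta$ large enough --- permissible because $\alpha_n \to \infty$ --- delivers the required $o(n^{-1})$ bound.

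The principal subtlety lies in reconciling the joint nature of the event $R''(a)$, which couples many exploration steps, with the adaptive exponential description of the waiting times $\tau_{k+1}-\tau_k$: one cannot simply condition on $R''(a)$ and keep the original law of each increment. The stopped-coupling device through $K$ resolves this, because $\{K > k\}$ is $\cF_{\tau_k}$-measurable, so the one-step domination by $Y_k$ is rigorous on that event, and Lemma~\ref{lem-domin} then upgrades it to the sum-level bound that feeds the MGF estimate. The remaining ingredients --- the harmonic sum asymptotic and the exponential MGF identity --- are routine.
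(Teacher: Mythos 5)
Your proof is correct and follows essentially the same route as the paper: restrict to the event $R''(a)$ from Lemma~\ref{lem-R}, dominate the inter-arrival times $\tau_{k+1}-\tau_k$ by independent $\mathrm{Exp}\bigl(\tfrac{\nu-1}{1+\epsilon}k\bigr)$ variables, bound the moment generating function via the harmonic sum $\sum_{k=\alpha_n}^{\beta_n-1}1/k\sim\tfrac12\log n$, and finish with Markov's inequality plus $\PP(R''(a)^c)=o(n^{-3/2})$. The only differences are minor: you take a fixed large $\theta$ where the paper uses the growing parameter $s=\sqrt{\alpha_n}$ (either choice suffices for $o(n^{-1})$), and your stopping-time coupling makes rigorous the step-by-step domination that the paper carries out by conditioning on $R''(a)$ directly.
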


\begin{proof}
Conditioning on the event $R''(a)$ defined in Lemma~\ref{lem-R}, we have that,
for any $\alpha_n\leq k\leq \beta_n$,
\begin{eqnarray*}
\tau_{k+1} - \tau_k \leq_{st} Y_k \sim \text{Exp}(S_k(a))\leq_{st}\text{Exp} \left(\frac{\nu
    -1}{1+\epsilon} k \right),
\end{eqnarray*}
and all the $Y_k$'s are independent.

Then we let $s = \sqrt{\alpha_n}$, and for $n$ large enough we obtain that

\begin{eqnarray*}
\EE\left[ e^{s (T_a(\beta_n) - T_a(\alpha_n))} \mid R''(a)\right] \leq \prod_{i=\alpha_n}^{\beta_n-1}\left(1 + \frac{s}{\frac{(\nu -1)i}{1+\epsilon} - s} \right) \\
\leq \prod_{i=\alpha_n}^{\beta_n-1}\left(1 + \frac{s(1+2\epsilon)}{(\nu -1)i} \right)\\
\leq \exp\left[\frac{s(1+2\epsilon)}{\nu-1} \sum_{i=\alpha_n}^{\beta_n-1} \frac{1}{i}\right] \\
\leq \exp \left[ \frac {s(1+ 3\epsilon) \log n}{2(\nu -1)}\right].
\end{eqnarray*}
Then we have by Markov's inequality
\begin{eqnarray*}
\PP\left( T_a(\beta_n) - T_a(\alpha_n) \geq \frac{(1+4\epsilon)\log n}{2(\nu-1)} \right) \leq 1 - \PP(R''(a)) \\
  + \PP(R''(a))\EE e^{s(T_a(\beta_n) - T_a(\alpha_n))} \exp\left(- \frac {s(1+ 4\epsilon) \log n}{2(\nu -1)}\right) \\
\leq \exp\left(- \frac{s\epsilon \log n}{2(\nu - 1)}\right)+o(n^{-5})= o(n^{-1}),
\end{eqnarray*}
which concludes the proof.
\end{proof}

Then combining Lemma \ref{lem-up-rq} and \ref{lem-up-r} finishes the proof.

\subsection{Proof of Proposition~\ref{prop-up}.}\label{sec:app-up}
\noindent
Fix two vertices $u$ and $v$. First consider the exploration process for
$B_w(u,t)$ until reaching $t=T_u(\beta_n)$. We know by
Lemma~\ref{lem-R} that, $$S_{\beta_n}(u) \geq (\nu-1-o(1)) \beta_n $$
with probability at least $1-o(n^{-3/2})$.
Thus there are at least $(\nu-1-o(1)) \beta_n$ half-edges in
$B_w(u,T_u(\beta_n))$ except with probability $n^{-3/2}$.

Next, begin exposing $B_w(v,t)$; each matching adds a uniform half-edge to
the neighborhood of $v$. Therefore, the probability that $B_w(v,T_v(\beta_n))$ does not intersect with $B_w(u,T_u(\beta_n))$ is at most
$$\left(1 - \frac{(\nu-1-o(1))\beta_n}{m}\right)^{\beta_n} \leq \exp [-(9-o(1))\log n] < n^{-4}$$
for large $n$ (recall that $\beta_n^2 = 9\frac{\lambda}{\nu-1}n\log n$). The union bound over $u$ and $v$ completes the proof.

\subsection{Proof of Proposition~\ref{prop-low}.}\label{sec:app-low}

We fix a vertex $u \in V_{\dmin}$. Let $\hd'_1, ..., \hd'_{\dmin}$ be the froward degree (i.e. the degree minus one) of neighbors of $u$. Now we consider the exploration process defined in Section~\ref{sec:explor} from the set $N(u)$. Let $\hd'_{\dmin+i}$ be the forward degree of the vertex added at $i$'s exploration step, with $i\geq 1$, and let
\begin{eqnarray}
\hS'_i(u) := \hd'_1 + ... + \hd'_{\dmin + i} - i.
\end{eqnarray}
Again let $\tau_i$ be the time of the $i$'th matching.
We have
$$\tau_{i+1}-\tau_i \geq_{st} Y_i \sim \text{Exp}\left( \hS'_i(u) \right),$$
and all the $Y_i$'s are independent. This follows from the fact that the worst case is when the explored set forms a tree. Also by Lemma \ref{lem-coupl}, we have
$$\sum_{j=1}^{\dmin+i} \hd'_j \leq_{st}  \sum_{j=1}^{\dmin+i} \bD^{(n)}_j ,$$
where $\bD^{(n)}_j$ are i.i.d with distribution $\bpi^{(n)}$. Let $\bnu^{(n)}$ be the expected value of $\bD^{(n)}_1$ which is:
$$\bnu^{(n)} := \sum_k k \bpi_k^{(n)} ,$$ and let
$z_n = \sqrt{n / \log n } $. Now we show that $\tau_{z_n} \geq t_n$ with high probability.

Let us define  $$T'(k) \sim \sum_{i=1}^k  \text{Exp}\left( \sum_{j=1}^{\dmin+i} \bD^{(n)}_j - i \right) ,$$
where all the exponential variables are independents. Then we have $\tau_{z_n} \geq_{st} T'(z_n)$.

\begin{lemma}\label{lem-coupl-exp}
Let $X_1,...,X_t$ be a random process adapted to a filtration $\mathcal{F}_0 = \sigma[\o], \mathcal{F}_1,...,\mathcal{F}_t$, and let $\mu_i = \EE X_i$, ${\bf \Sigma}_i = X_1+ ...+X_i$, $\Lambda_i = \mu_1 + ... + \mu_i$. Let $Y_i \sim \text{Exp}(\Sigma_i)$, and $Z_i \sim \text{Exp}(\Lambda_i)$, where all exponential variables are independents. Then we have
$$Y_1 + ... + Y_t \geq_{st} Z_1 + ... + Z_t .$$
\end{lemma}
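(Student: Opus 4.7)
My plan is to prove the stochastic dominance by an induction on $t$ combining Jensen's inequality with the filtration structure. For the base case $t=1$, the claim $\PP(Y_1>c)\geq\PP(Z_1>c)$ reduces to $\EE[e^{-cX_1}]\geq e^{-c\mu_1}$, which is immediate from Jensen's inequality applied to the convex function $x\mapsto e^{-cx}$.

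For the inductive step I would couple $Y_i = E_i/\Sigma_i$ using i.i.d.\ $\text{Exp}(1)$ variables $E_1,\ldots,E_t$ independent of the process $(X_j)$; this delivers the required conditional independence of the $Y_i$ given the rates. Under this coupling
\[
\sum_{i=1}^t Y_i \;=\; \sum_{i=1}^t \frac{E_i}{X_1+\cdots+X_i},
\]
and for each fixed realisation of $(E_1,\ldots,E_t)$ the right-hand side is a sum of convex decreasing functions of the partial sums, hence a convex function of $(X_1,\ldots,X_t)$ on $(0,\infty)^t$. Conditioning on $\mathcal{F}_{t-1}$ freezes the first $t-1$ partial sums and leaves only $X_t$ random inside the final summand; a conditional Jensen step pushes $X_t$ to its mean $\mu_t$, and iterating in reverse order $X_t,X_{t-1},\ldots,X_1$ compounds $t$ such inequalities, with the bookkeeping done via the tower property of conditional expectation.

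The main obstacle is upgrading from the mean-level inequality Jensen delivers to the pointwise stochastic dominance stated in the lemma, since Jensen a priori only yields $\EE[\phi(\sum Y_i)]\geq\EE[\phi(\sum Z_i)]$ for increasing convex $\phi$, whereas $\geq_{st}$ demands this for every increasing $\phi$. To close the gap I would apply the one-step Jensen not to $\phi(\sum Y_i)$ but to the conditional survival probability itself: writing
\[
\PP\Bigl(\sum_{i=1}^t Y_i > c \,\Big|\, \mathcal{F}_{t-1},E\Bigr) \;=\; \Phi(\Sigma_{t-1}+X_t)
\]
for an appropriate $\Phi$ determined by $\mathcal{F}_{t-1}$ and $E$, the key input is that $\Phi$ is a convex decreasing function of its argument on $(0,\infty)$. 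This convexity rests on the identity $s/(s+c)=\int_0^\infty s\,e^{-(s+c)u}\,du$ together with the complete monotonicity of $u\mapsto e^{-u}$, which together show that replacing a random rate by its mean can only decrease any factor of the form $\EE[\Sigma_i/(\Sigma_i+c)\mid\mathcal{F}_{i-1}]$. Once this one-step conditional Jensen at the tail-probability level is in place, compounding along the filtration in the manner of Lemma~\ref{lem-domin} yields the full stochastic dominance; verifying the convexity of $\Phi$ at each step is the technical heart of the argument.
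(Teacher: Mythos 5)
Your base case and the coupling $Y_i=E_i/\Sigma_i$ are fine, and you are right to flag that convexity of $(X_1,\dots,X_t)\mapsto\sum_i E_i/\Sigma_i$ only yields the increasing-convex order, not $\geq_{st}$. The genuine gap is in the device you propose to close it: the claim that at \emph{every} stage of the backward iteration the conditional survival probability is a convex function of the variable being averaged out. That is true only at the first stage, where $X_t$ enters a single rate: given $X_1,\dots,X_t$ and $Y_1,\dots,Y_{t-1}$ the survival probability is $\exp\bigl(-\Sigma_t\,(c-\sum_{i<t}Y_i)^+\bigr)$, convex in $X_t$, and conditional Jensen replaces $X_t$ by $\EE[X_t\mid\cF_{t-1}]$ --- note, the \emph{conditional} mean, not $\mu_t$ as you write; getting to $\mu_t$ needs an extra hypothesis such as $\EE[X_t\mid\cF_{t-1}]=\mu_t$ (which does hold in the paper's application, where the increments are i.i.d.). At the next stage $X_{t-1}$ enters two rates at once, $\Sigma_{t-1}$ and $\Sigma_{t-1}+m_t$, and the relevant function is no longer convex: the survival function at level $c$ of a sum of two independent exponentials with rates $x$ and $x+m$ equals $e^{-xc}(1+\kappa x)$ with $\kappa=(1-e^{-mc})/m$, whose second derivative in $x$ is $e^{-xc}\bigl(c^2(1+\kappa x)-2c\kappa\bigr)$, negative when $mc$ is small and $xc<1$. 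So the one-step conditional Jensen cannot be ``compounded'' as you describe, and your proposed repair via $s/(s+c)=\int_0^\infty s e^{-(s+c)u}\,du$ does not help: $s\mapsto s/(s+c)$ is \emph{concave}, so Jensen on factors $\EE[\Sigma_i/(\Sigma_i+c)\mid\cF_{i-1}]$ goes the wrong way; those are Laplace-transform factors, so at best they would give the Laplace-transform order rather than $\geq_{st}$; and they do not factorize over $i$ because the $\Sigma_i$ share increments.

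For comparison, the paper's own proof uses the same two ingredients (Jensen for a single mixed exponential, then induction after conditioning on $X_1$) and simply asserts the step that fails in your argument: its last inequality replaces $X_1$ by $\mu_1$ inside \emph{both} rates, which would require convexity of $a\mapsto\PP(\text{Exp}(a)+\text{Exp}(a+\mu_2)>s)$, false in general by the computation above. In fact the statement in this generality is not correct: take $t=2$, $X_1\in\{1/2,3/2\}$ with probability $1/2$ each, $X_2\equiv\delta$ small; writing $G(x)=e^{-x}(1+x)$, one gets $\PP(Y_1+Y_2>0.2)\approx\tfrac12\bigl(G(0.1)+G(0.3)\bigr)\approx 0.979 < 0.982\approx G(0.2)\approx\PP(Z_1+Z_2>0.2)$, so $\geq_{st}$ fails at small thresholds. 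Hence extra structure beyond adaptedness and unconditional means (e.g.\ the i.i.d./conditional-mean structure, lower bounds on the increments, and the large thresholds actually used in the application) is genuinely needed. So your attempt mirrors the paper's strategy but, as written, does not close the key step --- and no argument along these lines can, without importing such additional hypotheses.
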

\begin{proof}
By Jensen's inequality it is easy to see that for positive random variable $X$, we have
$$\text{Exp}(X) \geq_{st} \text{Exp}(\EE X) . $$
Then by induction, it suffices to prove that for a pair of random variables $X_1$, $X_2$ we have $Y_1 + Y_2 \geq_{st} Z_1 + Z_2 $. We have
\begin{eqnarray*}
\PP(Y_1 + Y_2 > s) = \EE_{X_1} [\PP(Y_1 + Y_2 > s | X_1 )] \\
\geq \EE_{X_1} [\PP(\text{Exp}(X_1) + \text{Exp}(X_1 + \mu_2) > s)] \\
\geq \PP(Z_1 + Z_2 >s) .
\end{eqnarray*}
\end{proof}
Then by Lemma~\ref{lem-coupl-exp}, we have
$$T'(z_n) \geq_{st} T^*(z_n) := \sum_{i=0}^{z_n} \text{Exp}\left(\dmin \bnu^{(n)} + (\bnu^{(n)} - 1) i\right), $$ where all exponential variables are independents.
Let $b := \dmin \bnu^{(n)} - (\bnu^{(n)} - 1)$. Then similarly to \cite{ding09}, we have
\begin{eqnarray*}
\PP(T^*(z_n) \leq t) \leq \int_{\sum x_i \leq t}e^{- \sum_{i=1}^{z_n} ((\bnu^{(n)} - 1)i + b)x_i}  dx_1 ... dx_{z_n}  \\
 \prod_{i=1}^{z_n} ((\bnu^{(n)} - 1)i + b) \ \ \ \ \ \\
= \int_{0 \leq y_1 \leq ... \leq t}e^ {-(\bnu^{(n)} - 1) \sum_{i=1}^{z_n} y_i} e^{-by_{z_n}} dy_1 ... dy_{z_n}  \\
\prod_{i=1}^{z_n} ((\bnu^{(n)} - 1)i + b), \ \ \ \ \
\end{eqnarray*}
where $y_k = \sum_{i=0}^{k-1} x_{z_n-i}$.
Letting $y$ play the role of $y_{z_n}$ and accounting for all
permutations over $y_1, ..., y_{z_{n}-1}$ (giving each such variable the range $[0, y]$),
\begin{eqnarray*}
\PP(T^*(z_n) \leq t) \leq \int_0^t  e^{-(\bnu^{(n)} - 1 + b) y} dy \frac{\prod_{i=1}^{z_n} (i + \frac{b}{\bnu^{(n)} - 1})}{(z_n-1)!}\\ . \int_{[0,y]^{z_n-1}} (\bnu^{(n)} - 1)^{z_n} e^{-(\bnu^{(n)} - 1)\sum_{i=1}^{z_n-1}y_i} dy_1...dy_{z_n-1}  \\
\leq \int_0^t  e^{-(\bnu^{(n)} - 1 + b) y} dy \frac{\prod_{i=1}^{z_n} (i + \frac{b}{\bnu^{(n)} - 1})}{(z_n-1)!}\\
\prod_{i=1}^{z_n-1} \int_0^y (\bnu^{(n)} - 1) e^{-(\bnu^{(n)} - 1)y_i} dy_i \\
\leq \int_0^t  e^{-\dmin \bnu^{(n)}} (1 - e^{-(\bnu^{(n)} - 1)y})^{z_n-1} dy\\
c (\bnu^{(n)} - 1) z_n^{\frac{b}{\bnu^{(n)} - 1}+1},\ \ \ \ \
\end{eqnarray*}
where $c>0$ is an absolute constant. Then we obtain
$$\PP(T^*(z_n) \leq t_n) \leq  c (\bnu^{(n)} - 1) z_n^{\frac{b}{\bnu^{(n)} - 1}} \int_0^t e^{-n^\epsilon} dy = o(n^{-4}) .$$

Then w.h.p. we have $|B'_w(u,t_n)| \leq z_n$. Choosing another vertex $v$, at random, and exposing $B'_w(v,t_n)$, again w.h.p we obtain a set of size at most $z_n$. Now because each matching is uniform among the remaining half-edges, then its probability of hitting $B'_w(u,t_n)$ is at most $\hS'_{z_n}(u)/n$.

Let $\epsilon_n := \log \log n$. By Markov's inequality we have
\begin{eqnarray*}
 \PP\left(\hS'_{z_n}(u) \geq z_n \epsilon_n \right) &\leq& \EE \hS'_{z_n}(u)/z_n \epsilon_n \\
&\leq& \frac{\dmin \bnu^{(n)}  + (\bnu^{(n)} - 1) z_n}{z_n \epsilon_n} = o(1).
\end{eqnarray*}
We conclude
$$\PP(B'_w(u,t_n)(u) \cap B'_w(v,t_n) \neq \emptyset) \leq \epsilon_n z_n^2/n = o(1) ,$$
which completes the proof.

\subsection{Proof of Lemma~\ref{lem:connect}.}\label{se:app-connect}

By Lemma~\ref{lem-R1}, $R'(a)$ holds with probability at leat $1-o(n^{-1})$. Then with probability $1-o(n^{-1})$, for an uniformly chosen vertex $a$, we have $S_k(a) \geq 1$ for all $1 \leq k \leq \alpha_n$. Then by union bound with probability $1-o(1)$, for all nodes $a \in V$, the size of the cluster $C_a$, starting from $a$ reaches $\alpha_n$.
Then we use Lemma~\ref{lem-R} to show that for all nodes, this cluster also reaches $\beta_n$. Now it is easy to conclude by Proposition~\ref{prop-up}.

\fi

\end{document}